\begin{document}

\newtheorem{theorem}{Theorem}
\newtheorem{lemma}[theorem]{Lemma}
\newtheorem{claim}[theorem]{Claim}
\newtheorem{cor}[theorem]{Corollary}
\newtheorem{prop}[theorem]{Proposition}
\newtheorem{definition}{Definition}
\newtheorem{question}[theorem]{Open Question}

\def\cA{{\mathcal A}}
\def\cB{{\mathcal B}}
\def\cC{{\mathcal C}}
\def\cD{{\mathcal D}}
\def\cE{{\mathcal E}}
\def\cF{{\mathcal F}}
\def\cG{{\mathcal G}}
\def\cH{{\mathcal H}}
\def\cI{{\mathcal I}}
\def\cJ{{\mathcal J}}
\def\cK{{\mathcal K}}
\def\cL{{\mathcal L}}
\def\cM{{\mathcal M}}
\def\cN{{\mathcal N}}
\def\cO{{\mathcal O}}
\def\cP{{\mathcal P}}
\def\cQ{{\mathcal Q}}
\def\cR{{\mathcal R}}
\def\cS{{\mathcal S}}
\def\cT{{\mathcal T}}
\def\cU{{\mathcal U}}
\def\cV{{\mathcal V}}
\def\cW{{\mathcal W}}
\def\cX{{\mathcal X}}
\def\cY{{\mathcal Y}}
\def\cZ{{\mathcal Z}}

\def\A{{\mathbb A}}
\def\B{{\mathbb B}}
\def\C{{\mathbb C}}
\def\D{{\mathbb D}}
\def\E{{\mathbb E}}
\def\F{{\mathbb F}}
\def\G{{\mathbb G}}
\def\H{{\mathbb H}}
\def\I{{\mathbb I}}
\def\J{{\mathbb J}}
\def\K{{\mathbb K}}
\def\L{{\mathbb L}}
\def\M{{\mathbb M}}
\def\N{{\mathbb N}}
\def\O{{\mathbb O}}
\def\P{{\mathbb P}}
\def\Q{{\mathbb Q}}
\def\R{{\mathbb R}}
\def\S{{\mathbb S}}
\def\T{{\mathbb T}}
\def\U{{\mathbb U}}
\def\V{{\mathbb V}}
\def\W{{\mathbb W}}
\def\X{{\mathbb X}}
\def\Y{{\mathbb Y}}
\def\Z{{\mathbb Z}}

\def\E{{\mathbf E}}
\def\Fp{\F_p}
\def\ep{{\mathbf{e}}_p}

\def\scr{\scriptstyle}
\def\\{\cr}
\def\({\left(}
\def\){\right)}
\def\[{\left[}
\def\]{\right]}
\def\<{\langle}
\def\>{\rangle}
\def\fl#1{\left\lfloor#1\right\rfloor}
\def\rf#1{\left\lceil#1\right\rceil}
\def\le{\leqslant}
\def\ge{\geqslant}
\def\eps{\varepsilon}
\def\mand{\qquad\mbox{and}\qquad}

\def\li{\mathrm{li}}

\newcommand{\comm}[1]{\marginpar{%
\vskip-\baselineskip 
\raggedright\footnotesize
\itshape\hrule\smallskip#1\par\smallskip\hrule}}

\def\xxx{\vskip5pt\hrule\vskip5pt}


\title{\bf On the Lang--Trotter and Sato--Tate Conjectures
on Average for Polynomial Families of  Elliptic Curves}

\author{ {\sc Igor E. Shparlinski} \\
{Department of Computing, Macquarie University} \\
{Sydney, NSW 2109, Australia} \\
{\tt igor.shparlinski@mq.edu.au}}

\date{\today}
\pagenumbering{arabic}

\maketitle

\begin{abstract}
We show that  the reductions modulo primes $p\le x$ of the elliptic curve
$$
 Y^2 = X^3 + f(a)X + g(b),  
$$
behave as predicted by the Lang--Trotter and Sato--Tate conjectures, on average  over
integers $a \in [-A,A]$ and $b \in [-B,B]$ for $A$ and $B$ reasonably 
small compared to $x$, 
provided that  $f(T), g(T) \in \Z[T]$ are not powers of another polynomial over $\Q$. 
For $f(T) = g(T) = T$ first results of this kind are due 
to \'E.~Fouvry and  M.~R.~Murty
 and have been further extended  by  other authors. Our technique 
is different from that of \'E.~Fouvry and M.~R.~Murty which 
does not seem to work in the case of general polynomials $f$ and $g$. 
\end{abstract}

\section{Introduction}
\label{sec:intro}

\subsection{Background} 

For a prime $p$, we denote by $\Fp$ the finite field with $p$
elements.

Given an elliptic curve $\E$ over $\Q$ and a prime 
$p$ we use 
$\E(\Fp)$ to denote the set  of
the $\Fp$-rational points of the reduction of $\E$
modulo $p$, provided that $p$ does not divide the
discriminant $\Delta(\E)$ of $\E$,  together with a point at
infinity. This set forms an \emph{abelian group} under an appropriate
composition rule and satisfies the \emph{Hasse
bound}:
\begin{equation}
\label{eq:HW bound} 
\left|\#\E(\F_p)- p - 1\right|\le 2\sqrt{p},
\end{equation}
see~\cite{Silv} for a background on elliptic curves.

Accordingly, we denote by $\Pi^{\tt LT}(\E,t;x)$ the number of
primes $3 < p \le x$ (with $p \nmid \Delta(\E)$) for which 
$ \#\E(\F_p)= p + 1- t$. The \emph{Lang--Trotter  conjecture}
asserts that if $\E$ does not have complex multiplication,
then the asymptotic formula
\begin{equation}
\label{eq:LT conj} \Pi^{\tt LT}(\E,t;x) \sim
 c(\E,t)\frac{\sqrt{x}}{\log x},\qquad x\to\infty,
\end{equation}
holds for some constant $c(\E,t) \ge 0$ depending only on $\E$ and $t$.

Since the Lang--Trotter conjecture~\eqref{eq:LT conj} remains widely open, 
see~\cite{Elk1,Elk2,Elk3,MurMurSar,Murt-VK2,Murt-VK3,MurSch,Ser,Wan}, 
it is natural to obtain its analogues  ``on average'' over various interesting 
families of curves.
For example, for integers $a$ and $b$ such that $4a^3+27b^2\ne 0$, we denote by
$\E_{a,b}$ the elliptic curve defined by the \emph{affine
Weierstra\ss\ equation}:
$$
\E_{a,b}~:~Y^2 = X^3 + aX + b.
$$
and put 
$$
\Pi^{\tt LT}_{a,b}(t;x) = \Pi^{\tt LT}(\E_{a,b},t;x).
$$

Fouvry and Murty~\cite{FoMu} have initiated the study of
$\Pi^{\tt LT}_{a,b}(t;x) $
and similar quantities on average and shown that 
 the asymptotic formula
\begin{equation}
\label{eq:L-T Aver}
\frac{1}{4AB}\sum_{|a|\le A}\sum_{|b|\le
B}\Pi^{\tt LT}_{a,b}(t;x)\sim C(t)\frac{\sqrt x}{\log
x},\qquad x\to\infty, 
\end{equation}
holds for $t=0$ (with $C(0) = \pi/3$) in the range
\begin{equation}
\label{eq:FM threshold} AB\ge x^{3/2 + \eps} \mand \min\{A,B\} \ge
x^{1/2 + \eps},
\end{equation}
for arbitrary fixed $\eps>0$
(for $4a^3+27b^2= 0$ we define  $\Pi^{\tt LT}_{a,b}(t;x) = 0$).  
David and Pappalardi~\cite{DavPapp1} have proved that~\eqref{eq:L-T Aver}
holds for any fixed $t$, with  some explicit constant $C(t)>0$ 
depending only on $t$,
but in a smaller range than~\eqref{eq:FM threshold}. This range
has been expanded to the original level  described by
the conditions~\eqref{eq:FM threshold} by Baier~\cite{Baier1}.
Finally, Baier~\cite{Baier2} has obtained~\eqref{eq:L-T Aver} 
in an even wider range
$$
AB\ge x^{3/2 + \eps} \mand \min\{A,B\} \ge
x^{\eps}
$$
(in this range  the  term with $ab=0$ must be eliminated from 
the summation in~\eqref{eq:L-T Aver} as their contribution 
may exceed the main term).

Furthermore, for an elliptic curve $\E$ over $\Q$ 
and a prime $p > 3$ 
we recall~\eqref{eq:HW bound} and 
define the angle $\psi (\E; p) \in [0, \pi]$ via the
identity
\begin{equation}
\label{eq:ST angle}
p+1 - \#\E(\F_p)  =  2\sqrt{p}\, \cos\psi(\E; p) .
\end{equation}
(we also define $\psi (\E; p)$ arbitrary, say as $\psi (\E;p) =0$ 
if  $p\mid 4a^3+27b^2$). 
For  $0 \le \alpha < \beta \le \pi$, 
we denote by $\Pi^{\tt ST}(\E,\alpha,\beta;x)$ the number of
primes $p \le x$ (with $p \nmid 4a^3+27b^2$) for which $ \alpha
\le \psi (\E;p) \le \beta$. Then, the  \emph{Sato--Tate  conjecture}
asserts that if $\E$ does not have complex multiplication,
then the asymptotic formula
\begin{equation}
\label{eq:ST conj} \Pi^{\tt ST} (\E,\alpha,\beta;x) \sim
\mu_{\tt ST}(\alpha,\beta)  \frac{x}{\log x},\qquad x\to\infty,
\end{equation}
holds, where 
\begin{equation}
\label{eq:ST dens}
\mu_{\tt ST}(\alpha,\beta) = \frac{2}{\pi}\int_\alpha^\beta
\sin^2\gamma\, d \gamma,
\end{equation}
is the \emph{Sato--Tate  density}, see~\cite{Birch,Katz,Murt-VK1}).

 The method of Fouvry and Murty~\cite{FoMu} is based on 
bounds of exponential sums; it is quite universal and 
has been applied to a number of related questions, 
see~\cite{AkDavJur,BaZh,BaCoDa,BBIJ,DavPapp1,DavPapp2,James,JamYu,Shp1,Shp2}.  
In particular, using this method, it is easy to show that 
\begin{equation}
\label{eq:S-T Aver}
\frac{1}{4AB}\sum_{|a|\le A}\sum_{|b|\le
B}\Pi^{\tt ST}_{a,b}(t;x)\sim\mu_{\tt ST}(\alpha,\beta)  \frac{x}{\log x},
\qquad x\to\infty, 
\end{equation}
in the same range~\eqref{eq:FM threshold}, where as before we define $$
\Pi^{\tt ST}_{a,b}(t;x) = \Pi^{\tt ST}(\E_{a,b},t;x).
$$

Note that Taylor~\cite{Tayl} has recently given a complete 
proof of~\eqref{eq:ST conj} (except for the curves $\E$ 
with integral $j$-invariant), but this  does not imply any
results on average due to the lack of uniformity with respect to
the coefficients $a$ and $b$ in the Weierstra\ss\ equation.

Recently, a different approach has been suggested in~\cite{BaSh}, which 
is based on bounds of multiplicative character sums, 
see also~\cite{BaCoDa,Shp2} for further application of this 
approach. We remark that the original purpose of~\cite{BaSh} has 
been to improve some results on the {\it Sato--Tate\/} conjecture 
for the curves $\E_{a,b}$ on average, and obtain~\eqref{eq:S-T Aver}
in a wider range than~\eqref{eq:FM threshold}, namely 
under the conditions
\begin{equation}
\label{eq:BS threshold} AB\ge x^{1 + \eps} \mand \min\{A,B\} \ge
x^{\eps},
\end{equation}
see also~\cite{Baier2,BaZh} for some other approaches.  

\subsection{Our Results}  

Here we show that the ideas of~\cite{BaSh} can 
also be used to  
extend~\eqref{eq:L-T Aver} and~\eqref{eq:S-T Aver}
to more general families of curves. 

Let us fix two polynomials $f(T), g(T) \in \Z[T]$ that  
 are not powers of another polynomial over $\Q$.

Here we consider the family of curves $\E_{f(a),g(b)}$
and obtain analogues of the asymptotic 
formulas~\eqref{eq:L-T Aver} and \eqref{eq:S-T Aver} for these curves, 
that is, we obtain asymptotic formulas for the average values
$$
\frac{1}{4AB}\sum_{|a|\le A}\sum_{|b|\le B}
\Pi^{\tt LT}_{f(a),g(b)}(t;x) \mand \frac{1}{4AB} \sum_{|a|\le A}\sum_{|b| \le B}
\Pi^{\tt ST}_{f(a),g(b)}(\alpha,\beta;x).
$$
As usual for questions of this kind, our main concern is to 
keep the amount of averaging as little as possible and thus
obtain these asymptotic formulas in the ranges comparable with
those given  by~\eqref{eq:FM threshold} and~\eqref{eq:BS threshold}. 

We note that the 
approach of  Fouvry and Murty~\cite{FoMu} does not 
seem to work for the families of curves $\E_{f(a),g(b)}$. 

We also study a one-parametric family of curves $\E_{f(a), g(a)}$
with two polynomials $f$ and $g$ 
over $\F_p$ (satisfying some natural condition). 
Using a result of Michel~\cite{Mich} we 
show that for a fixed $\eps>0$ and a sufficiently large prime $p$, 
the   corresponding angles $\psi(\E_{f(a), g(a)},p)$   are distributed 
with the Sato--Tate density when $a$ runs through consecutive integers
of  an interval of length at least $p^{3/4+\eps}$.   

Finally, we recall that upper bounds for $\Pi^{\tt LT}_{f(\rho),g(\rho)}(t;x)$,  on average 
when $\rho$ runs through the set of Farey fractions of order $Q$, 
are given in~\cite{CojHal,CojShp}. 

\subsection{Notation}  
   
Throughout the paper, any implied constants in the symbols $O$ and
$\ll$ may occasionally depend, where obvious, on the polynomials
$f$ and $g$ and the real parameters
$\eps$,  but are absolute otherwise. We recall that the
notations $U \ll V$ and  $U = O(V)$ are both equivalent to the
statement that the inequality $|U| \le c\,V$ holds with some
constant $c> 0$.

The letters $p$ and $q$ always denote prime numbers, while $m$ and
$n$ always denote integers. As usual, we use $\pi(x)$ to denote
the number of primes $p\le x$.

\section{Character Sums and Distribution of Power Residues}
\label{sec:Char Res}

\subsection{Character Sums}

For a prime $p$, we denote by $\cX_p$ the set of multiplicative
characters of $\Fp$, $\chi_0$ the principal character of $\Fp$,
and $\cX_p^*=\cX_p\setminus\{\chi_0\}$ the set of nonprincipal
characters; we refer the reader to~\cite[Chapter~3]{IwKow} for the
necessary background on multiplicative characters. We recall the
following orthogonality relations. For any integer $f \mid p-1$ and
$v \in \F_p$, 
\begin{equation}
\label{eqn:orth chi/u}
 \frac{1}{f} \sum_{\substack{\chi\in \cX_p\\
\chi^{f}=\chi_0}}\chi(v) =
\left\{\begin{array}{ll}
1&\quad\text{if $v= w^f$ for some   $w \in \F_p^*$,}\\
0&\quad\text{otherwise.}
\end{array}\right.
\end{equation}
Also, we have
\begin{equation}
\label{eqn:orth chi_1/chi_2} \frac{1}{p-1} \sum_{u\in
\F_p^*}\chi_1(u) \overline\chi_2(u) = \left\{\begin{array}{ll}
1&\quad\text{if $\chi_1=\chi_2$,}\\
0&\quad\text{otherwise,}
\end{array}\right.
\end{equation}
for all  $\chi_1,\chi_2 \in\cX_p$ (here,
$\overline\chi_2$ is the character obtained from $\chi_2$ by
complex conjugation).

The following result is a special case of the Weil bound 
(see~\cite[Equations~(12.23)]{IwKow})

\begin{lemma}
\label{lem:Char Compl} For any prime $p$ and  polynomial $h(T) \in \Z[T]$ 
which is not a power of another polynomial modulo $p$, uniformly 
over all  integers  $m$ and nontrivial multiplicative characters $\chi$ modulo 
$p$, we have
$$
\sum_{u= 1}^{p} \chi(h(u)) \exp\(2\pi i
\frac{mu}{  p}\) \ll p^{1/2} .
$$
\end{lemma}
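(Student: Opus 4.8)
The plan is to prove the hybrid character/additive-character sum bound in Lemma~\ref{lem:Char Compl} as a direct consequence of Weil's estimate for mixed exponential sums associated to a curve, in the form packaged in~\cite[Eq.~(12.23)]{IwKow}. First I would reduce to the case $m \not\equiv 0 \pmod p$ separately from $m \equiv 0 \pmod p$: when $p \mid m$ the additive factor is trivial and the bound is the pure multiplicative-character Weil bound $\left|\sum_{u} \chi(h(u))\right| \ll p^{1/2}$, valid precisely because $h$ is not a perfect power of another polynomial modulo $p$ (so that $\chi \circ h$ is not proportional to a function of a lower-degree argument in a way that kills cancellation), with the implied constant depending only on $\deg h$. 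When $p \nmid m$, the sum $\sum_{u=1}^{p} \chi(h(u)) e_p(mu)$ is a mixed character sum of exactly the shape covered by the general Weil bound, and one again gets $O(p^{1/2})$ with an implied constant depending only on $\deg h$ (and absorbing the finitely many bad primes dividing the relevant discriminants into the constant, since for those $p$ the trivial bound $p \le p^{1/2}\cdot p^{1/2}$ with $p$ bounded suffices).

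The key technical point I would spell out is the hypothesis under which~\cite[Eq.~(12.23)]{IwKow} applies without degeneration. The statement there concerns sums $\sum_{u} \chi(f_1(u)) \psi(f_2(u))$ and gives square-root cancellation provided the pair $(f_1,f_2)$ is not ``degenerate'': roughly, $\chi(f_1)$ is non-constant on the relevant cover (equivalently $f_1$ is not of the form $c \cdot h_0^{\mathrm{ord}(\chi)}$) and $\psi(f_2)$ contributes, which for $f_2(u) = mu$ with $p \nmid m$ is automatic. Here $f_1 = h$ and the assumption that $h$ is not a power of another polynomial modulo $p$ — in particular not an $e$-th power for any $e > 1$ dividing $p-1$ — is exactly what guarantees $\chi \circ h$ is not constant on $\F_p$ and, more to the point, that the associated Artin–Schreier–Kummer cover has the expected genus, so the number of relevant points on it is $p + O(p^{1/2})$ with the $O$-constant depending only on $\deg h$. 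I would note that the only subtlety is uniformity over $\chi$ and $m$: the genus bound underlying~\cite[Eq.~(12.23)]{IwKow} depends only on $\deg h$ and not on the order of $\chi$ or on $m$, so the resulting estimate is genuinely uniform in both parameters as claimed.

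The main obstacle — really the only one — is making sure the non-degeneracy hypothesis of the cited Weil bound is met for \emph{every} nontrivial $\chi$, including those whose order shares a common factor with $\deg h$ or with the multiplicities of the roots of $h$. The clean way around this is precisely the standing hypothesis: since $h$ is not $h_0^e$ for any polynomial $h_0$ and any $e \ge 2$ over $\overline{\F_p}$ (which is what ``not a power of another polynomial modulo $p$'' should be taken to mean, and which holds for all but finitely many $p$ when $f,g$ are not powers over $\Q$ — those finitely many exceptional $p$ going into the implied constant), the function $\chi(h(u))$ is never a constant times the $\mathrm{ord}(\chi)$-th power of anything, so the cover is geometrically irreducible of controlled genus and the Weil bound applies with a uniform constant. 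Thus the proof is: dispose of bounded $p$ trivially; for large $p$, invoke~\cite[Eq.~(12.23)]{IwKow} with $f_1 = h$, $f_2 = mu$, using the perfect-power hypothesis to certify non-degeneration and extract the uniform $O(p^{1/2})$.
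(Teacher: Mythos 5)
Your proposal is correct and follows exactly the route the paper takes: the paper offers no proof beyond citing the Weil bound in the form of \cite[Eq.~(12.23)]{IwKow}, and your argument simply unpacks that citation (case split on $p\mid m$, verification of the non-degeneracy hypothesis via the power-free assumption on $h$, uniformity of the implied constant in $\chi$ and $m$). The elaboration is accurate but not a different method.
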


Combining Lemma~\ref{lem:Char Compl} 
with the standard reduction between complete and incomplete sums
(see~\cite[Section~12.2]{IwKow}), we obtain the following result.

\begin{lemma}
\label{lem:Char Incompl} For any prime $p$ and  polynomial $h(T) \in \Z[T]$ 
which is not a power of another polynomial modulo $p$,  uniformly over all positive 
integers $L,M$ and nontrivial multiplicative characters $\chi$ modulo 
$p$,  we have
$$
\sum_{n=L+1}^{L+M} \chi(h(n)) \ll  (M/p + 1)p^{1/2} \log p.
$$
\end{lemma}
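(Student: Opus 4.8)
The plan is to use the standard device of writing the characteristic function of an interval as a combination of additive characters modulo $p$, which reduces the incomplete sum to complete sums of the type handled by Lemma~\ref{lem:Char Compl}. First I would dispose of the ``many full periods'' part: writing $M = p\fl{M/p} + r$ with $0 \le r < p$, I split $[L+1, L+M]$ into $\fl{M/p}$ complete residue systems modulo $p$ together with one leftover block of length $r < p$. On each complete residue system the inner sum equals $\sum_{u=1}^{p}\chi(h(u))$, which by Lemma~\ref{lem:Char Compl} applied with $m = 0$ (legitimate, since $h$ is not a power of another polynomial modulo $p$ and the lemma is uniform over all integers $m$) is $O(p^{1/2})$; these contribute $O\((M/p)p^{1/2}\)$ altogether.

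For the leftover block, say $\sum_{n=N+1}^{N+r}\chi(h(n))$ with $0 \le r < p$, I would insert the orthogonality relation for additive characters and obtain
$$
\sum_{n=N+1}^{N+r}\chi(h(n)) = \frac{1}{p}\sum_{m=0}^{p-1}\left(\sum_{j=N+1}^{N+r}\exp\(-2\pi i mj/p\)\right)\sum_{u=1}^{p}\chi(h(u))\exp\(2\pi i mu/p\).
$$
The term $m = 0$ contributes at most $(r/p)\left|\sum_{u=1}^p \chi(h(u))\right| \ll p^{1/2}$. For $m \ne 0$, Lemma~\ref{lem:Char Compl} bounds the $u$-sum by $O(p^{1/2})$, while the geometric sum over $j$ is $O\(1/\|m/p\|\)$, where $\|t\|$ denotes the distance from $t$ to the nearest integer. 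Since $\sum_{1 \le m \le p-1} \|m/p\|^{-1} \ll p\log p$, the contribution of the terms with $m \ne 0$ is $\ll p^{-1}\cdot p^{1/2}\cdot p\log p = p^{1/2}\log p$. Adding all contributions gives $\ll (M/p)p^{1/2} + p^{1/2}\log p \ll (M/p + 1)p^{1/2}\log p$, as claimed.

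There is no real obstacle here; the argument is the routine complete-to-incomplete reduction (see~\cite[Section~12.2]{IwKow}). The only point that genuinely uses the hypothesis on $h$ is the invocation of Lemma~\ref{lem:Char Compl}, both for the twisted complete sums with $m \ne 0$ and for the plain complete sum $\sum_u \chi(h(u))$ (the case $m = 0$), and the $\log p$ loss enters, as always, solely through the harmonic-type sum $\sum_m \|m/p\|^{-1}$ arising from the leftover block.
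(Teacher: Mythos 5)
Your argument is exactly the ``standard reduction between complete and incomplete sums'' from~\cite[Section~12.2]{IwKow} that the paper invokes (the paper gives no further details), so your proof matches the paper's approach. Each step is correct: the full-period blocks are bounded via Lemma~\ref{lem:Char Compl} with $m=0$, the leftover block is handled by additive-character orthogonality plus the same lemma for $m\ne 0$, and the $\log p$ comes in through the $\sum_m \|m/p\|^{-1}$ estimate, as expected.
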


\subsection{Distribution of Powers}

Adapting  the idea of Fouvry and Murty~\cite{FoMu}
we study  the distribution of the pairs
\begin{equation}
\label{eq:pairs} \{(ru^4, su^6)~:~u\in \F_p\},
\end{equation}
where $r,s\in \F_p^*$, 
among the residues modulo $p$ of the polynomial values $(f(a), g(b))$
with  $|a| \le A$, $|b|\le B$. 
However, instead of exponential sums, used in~\cite{FoMu}, 
we follow the approach of~\cite{BaSh} and study the distribution of 
the pairs~\eqref{eq:pairs} using multiplicative character sums.

We also note that since the polynomials $f(T), g(T) \in \Z[T]$ that  
are not powers of another polynomial over $\Q$, for a sufficiently large 
prime $p$ they are not powers of a polynomial modulo $p$. Thus
Lemma~\ref{lem:Char Incompl} applies to character sums with $f(T)$ and $g(T)$.

We begin by investigating the distribution of
the second component $su^6$ of 
the pairs~\eqref{eq:pairs}. Accordingly, we
define
$$
\cZ_s(B;p)=\{(u,b) \in\Fp^*\times [-B,B]~:~s u^6 \equiv g(b) \pmod p,  \ 
|b| \le B\}.
$$
We note that although we are usually only interested in the 
first component $u$, we defined $\cZ_s(B;p)$ as a set of pairs 
$(u,b)$, which essentially means that each $u$ is taken 
with the mulitplicity of the the residue class $s u^6$ 
amongst the elements of $[-B,B]$.

We have the following bound on the cardinality of $\cZ_s(B;p)$:

\begin{lemma}
\label{lem:ZsB} For any prime $p$, integer $B\ge 1$
 and $s \in \F_p^*$, we have
$$
\#\cZ_s(B;p) = 2B + O\((B/p+1)p^{1/2 + o(1)}\)
$$
as $p\to \infty$. 
\end{lemma}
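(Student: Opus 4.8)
The plan is to count the set $\cZ_s(B;p)$ by detecting the condition $su^6 \equiv g(b)\pmod p$ via multiplicative characters, in the spirit of the orthogonality relation~\eqref{eqn:orth chi/u}. Write $N = \#\cZ_s(B;p)$. For each $b$ with $|b|\le B$, the number of $u\in\F_p^*$ with $su^6 \equiv g(b)$ is, by~\eqref{eqn:orth chi/u} applied with exponent $d=\gcd(6,p-1)$, equal to
\[
\sum_{\substack{\chi\in\cX_p\\ \chi^6=\chi_0}} \chi\(s^{-1}g(b)\)
\]
whenever $s^{-1}g(b)$ is a nonzero sixth power (and an appropriate count, bounded by $6$, otherwise; the $b$ with $p\mid g(b)$ contribute $O(1)$ and can be absorbed). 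So
\[
N = \sum_{|b|\le B}\ \sum_{\substack{\chi\in\cX_p\\ \chi^6=\chi_0}} \chi\(s^{-1}g(b)\) + O(\deg g).
\]
First I would separate the principal character $\chi=\chi_0$: its contribution is $\#\{|b|\le B : p\nmid g(b)\} = 2B + O(1)$, which is the main term. For each of the remaining (at most $5$) nontrivial characters $\chi$ with $\chi^6=\chi_0$, the inner sum is $\chi(s^{-1})\sum_{|b|\le B}\chi(g(b))$, an incomplete multiplicative character sum to which Lemma~\ref{lem:Char Incompl} applies, since for $p$ large $g$ is not a power of a polynomial modulo $p$; this gives a bound $\ll (B/p+1)p^{1/2}\log p$ for each such character.

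Summing the $O(1)$ nontrivial characters yields the total error term $O\((B/p+1)p^{1/2}\log p\)$, which is certainly absorbed into $O\((B/p+1)p^{1/2+o(1)}\)$. Thus $N = 2B + O\((B/p+1)p^{1/2+o(1)}\)$, as claimed.

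The one point requiring a little care is the reduction of $g$ modulo $p$: Lemma~\ref{lem:Char Incompl} requires that $g(T)$ not be a power of another polynomial \emph{modulo $p$}, whereas our hypothesis is that it is not such a power over $\Q$. As already observed in the paragraph preceding Lemma~\ref{lem:ZsB}, this passes to all sufficiently large $p$ (the finitely many ``bad'' primes where a nontrivial factorization of $g$ as a perfect power emerges can be excluded, or simply absorbed into the error since for them $N \le (2B+1)\deg g \ll (B/p+1)p^{1+o(1)}$ trivially, which is dominated by the stated error when the error term is interpreted with the $p\to\infty$ convention). This is also why the lemma is stated asymptotically as $p\to\infty$ rather than uniformly for all $p$. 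The rest is bookkeeping: matching $\gcd(6,p-1)$-detection with~\eqref{eqn:orth chi/u}, and checking that the $O(1)$ values of $b$ with $p\mid g(b)$ and the possible non-sixth-power values of $s^{-1}g(b)$ (which contribute boundedly per $b$ but are not detected cleanly by the character sum) only affect lower-order terms. I expect no real obstacle here; the substantive input is entirely Lemma~\ref{lem:Char Incompl}, i.e. the Weil bound.
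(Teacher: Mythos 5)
Your proposal matches the paper's proof in substance: detect the condition $su^6\equiv g(b)\pmod p$ by orthogonality of multiplicative characters, isolate the principal character to get the main term $2B+O(1)$, and bound each of the $O(1)$ nontrivial character sums $\sum_{|b|\le B}\chi\(g(b)\)$ via Lemma~\ref{lem:Char Incompl}. (The paper writes the exponent condition as $\chi^{d_p}=\chi_0$ with $d_p=\gcd(p-1,6)$, while you write $\chi^6=\chi_0$; these define the same set of characters since every character's order divides $p-1$, so this is cosmetic.) One small simplification worth noting: your caveats about $b$ with $p\mid g(b)$ and about non-sixth-power values are unnecessary, since the identity $\#\{u\in\F_p^*:u^6\equiv n\}=\sum_{\chi^{d_p}=\chi_0}\chi(n)$ holds for every $n$, including $n\equiv 0$ (both sides vanish) and $n$ a non-residue (both sides vanish); the paper uses this cleanly without any side correction term.
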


\begin{proof}
We now define 
$$
d_p = \gcd(p-1,6).
$$
By the orthogonality relation~\eqref{eqn:orth chi/u}, 
for all $n \in\Z$ we have
$$
\#\{u\in\Fp^*~:~u^6\equiv n\pmod p\}=\sum_{\substack{\chi\in
\cX_p\\ \chi^{d_p}=\chi_0}}\chi(n).
$$
If $\overline s$ is an integer such that $s\overline s\equiv
1\pmod p$, it follows that
$$
\#\cZ_s(B;p)=\sum_{|b| \le B}\sum_{\substack{\chi\in \cX_p\\
\chi^{d_p}=\chi_0}}\chi\(\overline s g(b)\)
= 2B + O(1) +\sum_{\substack{\chi\in \cX_p^*\\ \chi^{d_p}=\chi_0}}
\overline \chi(s) \sum_{|b| \le B}\chi\(g(b)\).
$$
Using Lemma~\ref{lem:Char Incompl} we conclude the proof.
\end{proof}

We now take into account the distribution of the first component
$ru^4$ of
the pairs~\eqref{eq:pairs}. For any integers $A,B\ge 1$ and $r,s \in \F_p$, 
we define the set of triples:
\begin{equation*}
\begin{split}
\cZ_{r,s}(A,B;p)=\{(u,a,b)~:~r u^4 \equiv & f(a) \pmod p, \\
& (u,b)\in \cZ_s(B;p),\ |a|\le A \}.
\end{split}
\end{equation*}

\begin{lemma}
\label{lem:ZrsAB} For  any  prime $p$,   
 integers $A,B \ge 1$   and $s \in \F_p^*$, we have
$$
\sum_{r\in\Fp^*}\left|\#\cZ_{r,s}(A,B;p) -
 \frac{2A\cZ_s(B;p)}{p-1}\right|^2 \le 
\(\frac{A}{p}+1\)^2 \(\frac{B}{p} + 1\) Bp^{1+ o(1)}
$$
as $p\to\infty$.
\end{lemma}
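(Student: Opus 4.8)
The plan is to expand the square, open up the defect $\#\cZ_{r,s}(A,B;p) - 2A\,\#\cZ_s(B;p)/(p-1)$ using multiplicative characters, and reduce everything to the incomplete character sums controlled by Lemma~\ref{lem:Char Incompl}. Concretely, set $d_p = \gcd(p-1,4)$. By the orthogonality relation~\eqref{eqn:orth chi/u}, for each integer $a$ we have $\#\{u \in \F_p^* : u^4 \equiv f(a) \pmod p\} = \sum_{\chi^{d_p}=\chi_0}\chi(f(a))$ (with the convention that $\chi(0)=0$). Summing this against the indicator of $(u,b) \in \cZ_s(B;p)$ — which for each fixed $u$ counts the $b \in [-B,B]$ with $su^6 \equiv g(b)$ — and isolating the principal character, one gets
$$
\#\cZ_{r,s}(A,B;p) = \frac{2A\,\#\cZ_s(B;p)}{p-1} + O\Big(\tfrac{\#\cZ_s(B;p)}{p}\Big) + \sum_{\substack{\chi \in \cX_p^*\\ \chi^{d_p}=\chi_0}} \overline\chi(r)\, W_\chi,
$$
where $W_\chi = \sum_{(u,b) \in \cZ_s(B;p)} \chi(f(a))$ summed appropriately — more precisely $W_\chi = \sum_{u}\chi(\overline r f(\cdot))$-type weights; the cleanest bookkeeping keeps $W_\chi$ as a function of $\chi$ and $s$ but not of $r$, with the $r$-dependence entirely in the twist $\overline\chi(r)$.

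Next I would apply the large-sieve-type step: summing $|\cdot|^2$ over $r \in \F_p^*$ and using orthogonality~\eqref{eqn:orth chi_1/chi_2} in the form $\sum_{r}\overline\chi_1(r)\chi_2(r) = (p-1)[\chi_1=\chi_2]$, the cross terms collapse and we obtain
$$
\sum_{r\in\F_p^*}\Big|\#\cZ_{r,s}(A,B;p) - \frac{2A\,\#\cZ_s(B;p)}{p-1}\Big|^2 \ll (p-1)\sum_{\substack{\chi\in\cX_p^*\\ \chi^{d_p}=\chi_0}} |W_\chi|^2 + (\text{error from the } O(\#\cZ_s(B;p)/p) \text{ term})^2 \cdot p,
$$
so the whole problem reduces to bounding $\sum_{\chi^{d_p}=\chi_0,\ \chi\ne\chi_0} |W_\chi|^2$, a sum over at most $3$ characters. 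For each such $\chi$, $W_\chi$ is a sum over $(u,b) \in \cZ_s(B;p)$ of $\chi$ evaluated on a shifted value of $f$; by Cauchy–Schwarz against $\#\cZ_s(B;p)$ (bounded by $2B + O((B/p+1)p^{1/2+o(1)}) \ll (B/p+1)\,p^{o(1)}\,B$-ish, or more simply $\#\cZ_s(B;p)\le 2B+\ldots$), and then opening the inner sum, each $|W_\chi|^2$ reduces to a double character sum over $a_1, a_2 \in [-A,A]$ of $\chi(f(a_1))\overline\chi(f(a_2))$ weighted by the number of common $u$'s, which is again governed by an incomplete sum of length $M \le 2A$ to which Lemma~\ref{lem:Char Incompl} applies, giving $\ll (A/p+1)p^{1/2}\log p$ per inner sum. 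Multiplying the three factors $(A/p+1)p^{1/2+o(1)}$, $(B/p+1)p^{1/2+o(1)}$ (from $\#\cZ_s$), $B$, and the outer $(p-1)$ — and checking the exponents of $p$ bookkeep to $p^{1+o(1)}$ — yields the claimed bound $(A/p+1)^2(B/p+1)\,B\,p^{1+o(1)}$.

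The main obstacle is the careful bookkeeping of which variables the character sums run over after each Cauchy–Schwarz, making sure the $u$-variable (which lives in $\F_p^*$ and is tied to both $f(a)$ via $ru^4 \equiv f(a)$ and to $g(b)$ via $su^6 \equiv g(b)$) is handled so that one ends up with a genuine incomplete sum in an integer variable of length $O(A)$ or $O(B)$ rather than a complete sum of length $p$ — this is what produces the favorable $(A/p+1)$, $(B/p+1)$ factors instead of a trivial $p$. One must also verify that $f$ is not a power of a polynomial mod $p$ for large $p$ (granted by the hypothesis, as noted after Lemma~\ref{lem:Char Incompl}), so that Lemma~\ref{lem:Char Incompl} is actually applicable to the sums involving $f$. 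The exponent $2$ on the $(A/p+1)$ factor (versus $1$ on $(B/p+1)$) is the telltale sign that the $a$-variable gets doubled by the $|W_\chi|^2$-expansion while the $b$-variable is only counted once through $\#\cZ_s(B;p)$; keeping this asymmetry straight is the crux.
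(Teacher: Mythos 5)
Your decomposition starts from the wrong orthogonality relation, and this propagates into a bookkeeping that cannot reach the claimed bound. In $\cZ_{r,s}(A,B;p)$ the pair $(u,b)$ is already fixed inside $\cZ_s(B;p)$, and what must be detected, for each such $(u,b)$, is the \emph{congruence} $f(a)\equiv r u^4 \pmod p$ as $a$ ranges over $[-A,A]$. That requires the full orthogonality
$\frac{1}{p-1}\sum_{\chi\in\cX_p}\chi(ru^4)\overline\chi(f(a))=[\,f(a)\equiv r u^4\,]$,
so the error term is a sum over \emph{all} $\chi\in\cX_p^*$, namely
$\frac{1}{p-1}\sum_{\chi\in\cX_p^*}\chi(r)\,\bigl(\sum_{(u,b)\in\cZ_s}\chi(u^4)\bigr)\bigl(\sum_{|a|\le A}\overline\chi(f(a))\bigr)$.
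Your restriction to $\chi^{d_p}=\chi_0$ with $d_p=\gcd(p-1,4)$ detects ``is a fourth power,'' which is the count of $u$'s over all of $\F_p^*$ for a fixed $a$ --- it cannot be weighted by the multiplicity of $u$ in $\cZ_s$ after the $u$-sum has already been performed, and it leaves at most three nonprincipal characters, so the main term $\frac{2A\,\#\cZ_s(B;p)}{p-1}$ you write down does not actually emerge from this decomposition (the $\chi_0$ contribution in your setup is $2A$, not $2A\,\#\cZ_s(B;p)/(p-1)$).

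The second, independent problem is that you never produce the $(B/p+1)\,B$ factor nor the correct power of $p$. After summing $|\cdot|^2$ over $r$ and using $\sum_r\chi_1(r)\overline\chi_2(r)=(p-1)[\chi_1=\chi_2]$, one is left with
$\frac{1}{p-1}\sum_{\chi\in\cX_p^*}\bigl|\sum_{(u,b)\in\cZ_s}\chi(u^4)\bigr|^2\bigl|\sum_{|a|\le A}\chi(f(a))\bigr|^2$.
The paper pulls out the $a$-sum squared, giving $(A/p+1)^2 p^{1+o(1)}$ via Lemma~\ref{lem:Char Incompl} applied once and then squared (this is where the exponent $2$ on $(A/p+1)$ comes from), and then bounds $\sum_{\chi\in\cX_p}\bigl|\sum_{(u,b)\in\cZ_s}\chi(u^4)\bigr|^2=(p-1)T$, where $T$ counts quadruples $(u_1,b_1,u_2,b_2)\in\cZ_s^2$ with $u_1^4\equiv u_2^4$. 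The crucial observation --- absent from your proposal --- is that $u_1^4\equiv u_2^4$ forces $u_1^{12}\equiv u_2^{12}$, hence (using $su_j^6\equiv g(b_j)$) the congruence $g(b_1)^2\equiv g(b_2)^2\pmod p$, so $T\ll B(B/p+1)$ because each $b_1$ fixes $b_2$ up to $O(B/p+1)$ choices and each $b_j$ carries at most six $u_j$'s. Without this step your ``multiply the factors'' computation gives $(A/p+1)(B/p+1)B\,p^{2+o(1)}$, which exceeds the target $(A/p+1)^2(B/p+1)B\,p^{1+o(1)}$ by a factor of order $p/(A/p+1)$, i.e.\ by a full power of $p$ whenever $A\le p$.
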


\begin{proof} 
Using~\eqref{eqn:orth chi/u} it
follows that
\begin{equation*}
\begin{split}
\#\cZ_{r,s}&(A,B;p)=\sum_{(u,b)\in \cZ_s(B;p) } \sum_{ |a|\le A } 
\frac{1}{p-1} \sum_{\chi\in\cX_p}\chi(ru^4) \overline{\chi\(f(a)\)}\\
&=\frac{2 A \# \cZ_s(B;p)  }{p-1} + 
\frac{1}{p-1}\sum_{\chi\in\cX_p^*} \chi(r) \sum_{(u,b)\in \cZ_s(B;p)
}\chi(u^4) \sum_{ |a|\le A } \overline\chi\(f(a)\).
\end{split}
\end{equation*}
Thus
\begin{equation*}
\begin{split}
\#\cZ_{r,s}&(A,B;p)  -
\frac{2 A \# \cZ_s(B;p)  }{p-1} \\
\ll &
\frac{1}{p-1} \left|\,\sum_{\chi\in\cX_p^*}
\chi(r) \sum_{(u,b)\in \cZ_s(B;p) }\chi(u^4) \sum_{ |a|\le A } 
\overline\chi\(f(a)\) \right|. \end{split}
\end{equation*}

Hence, 
\begin{equation}
\label{eq:Mult Sum} 
\sum_{r\in\Fp^*}\left|\#\cZ_{r,s}(A,B;p)  -
\frac{4 A B}{p-1} \right|^2 \ll  W,
\end{equation}
where
$$
W= \frac{1}{(p-1)^2} \sum_{r\in\Fp^*} \left|\,\sum_{\chi\in\cX_p^*}
\chi(r) \sum_{(u,b)\in \cZ_s(B;p) }\chi(u^4) \sum_{ |a|\le A } 
\overline\chi\(f(a)\) \right|^2.
$$
Squaring out and changing the order of summation, we derive
\begin{equation*}
\begin{split}
W =  \frac{1}{(p-1)^2}& \sum_{\chi_1, \chi_2\in\cX_p^*} \sum_{(u_1,b_1),(u_2,b_2)\in
\cZ_s(B;p)} \chi_1(u_1^4) \overline\chi_2(u_2^4) \\
& \qquad\qquad \sum_{|a_1|,|a_2|\le A } \overline\chi_1\(f(a_1)\) 
\chi_2\(f(a_2)\)\sum_{r\in\Fp}
\chi_1(r) \overline\chi_2(r).
\end{split}
\end{equation*}
Using the orthogonality relation~\eqref{eqn:orth chi_1/chi_2} we
deduce that
$$ W =  \frac{1}{p-1}  \sum_{\chi\in\cX_p^*} \left|\sum_{(u,b)\in
\cZ_s(B;p)} \chi(u^4) \right|^2
    \left|\sum_{ |a|\le A }  \chi\(f(a)\)\right|^2.
$$
Using Lemma~\ref{lem:Char Incompl}, it follows that
\begin{equation}
\label{eq:W2}
W \le (A/p+1)^2p^{o(1)}\sum_{\chi\in\cX_p^*}
 \left|\sum_{(u,b)\in \cZ_s(B;p)} \chi(u^4) \right|^2.
\end{equation}

We now extend the summation in~\eqref{eq:W2} to include the
trivial character $\chi=\chi_0$. Then by the orthogonality 
relation~\eqref{eqn:orth chi/u},   we have
\begin{equation}
\label{eq:W2T}
\sum_{\chi\in\cX_p^*} \left|\sum_{(u,b)\in \cZ_s(B;p)} \chi(u^4)
\right|^2   \le   \sum_{\chi\in\cX_p} \left|\sum_{(u,b)\in \cZ_s(B;p)}
\chi(u^4) \right|^2  = (p-1) T,
\end{equation}
where $T$ is the number of solutions to the congruence
$$
u_1^4\equiv u_2^4 \pmod p,\qquad (u_1,b_1),(u_2,b_2)\in
\cZ_s(B;p).
$$
Obviously $T$ does not exceed  
\begin{equation}
\label{eq:12th powers}
u_1^{12}\equiv u_2^{12} \pmod p,\qquad (u_1,b_1),(u_2,b_2)\in
\cZ_s(B;p).
\end{equation}
Since $su_j^6 \equiv g(b_j)\pmod p$ for some $b_j$ with $|b_j| \le B$,
$j = 1, 2$, 
each solution to~\eqref{eq:12th powers} leads to a congruence
$$
g(b_1)^{2} \equiv g(b_2)^{2}\pmod p,
\qquad |b_1|,|b_2| \le B.
$$
Therefore $T\ll B(B/p+1)$ (because
each $b_j$ corresponds to at most six values of $u_j$).

Thus recalling~\eqref{eq:W2} and~\eqref{eq:W2T}, 
we  conclude the proof.
\end{proof}

\section{Elliptic  Curves}
 
\subsection{Isomorphic Elliptic  Curves}
\label{sec:Isom}

It is well known that if $a,b,r,s\in\Fp$, then $\E_{a,b}(\F_p)\cong\E_{r,s}(\F_p)$,
that is, the curves
$\E_{a,b}$ and $\E_{r,s}$ are \emph{isomorphic over $\F_p$}, if and
only if
\begin{equation}
\label{eq:isom cond}
 a=ru^4 \mand b=su^6
\end{equation}
for some $u \in \F_p^*$. In
particular, each curve $\E_{a,b}$ with $a,b\in \F_p^*$ is
isomorphic to $(p-1)/2$ elliptic curves $\E_{r,s}$, and there are
$2p + O(1)$ distinct isomorphism classes of elliptic curves over
$\Fp$; see~\cite{Len}. 

Therefore we see that  
a link between the distribution of ellipic curves 
of various types and  sets $\cZ_{r,s}(A,B;p)$ is
given by the following trivial statement. 

For an arbitrary set $\cS \subseteq \Fp\times\Fp$, we denote by
$M_p(\cS,A,B)$ the number of curves $\E_{f(a),g(b)}$ such that the
reduction modulo $p$ of the pair $(f(a),g(b))$ belongs to $\cS$,  
$a \in  [-A,A]$ and $b \in [-B,B]$.  

\begin{lemma}
\label{lem:S and Z} Suppose 
that $f(T), g(T) \in \Z[T]$ are not powers of another polynomial 
over $\Q$.  Assume that for a prime $p>3$ we are given
a sets $\cS \subseteq \Fp^*\times\Fp^*$ such that
whenever $(r,s)\in\cS$ and $\E_{a,b}(\F_p)\cong\E_{r,s}(\F_p)$ it
follows that $(a,b)\in\cS$.
Then for any integers $A,B \ge 1$,  the following bound holds:
$$
M_p(\cS,A,B)= \frac{1}{p-1} \sum_{(r,s)\in
\cS}\#\cZ_{r,s}(A,B;p)+O\(AB/p + A + B\).
$$
\end{lemma}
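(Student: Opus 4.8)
The plan is to count $M_p(\cS,A,B)$ by sorting the pairs $(a,b)$ with $|a|\le A$, $|b|\le B$ according to which isomorphism class the reduced pair $(f(a),g(b))$ lies in, and then to replace each class count by $\#\cZ_{r,s}(A,B;p)$ up to controllable errors. First I would split the range of $(a,b)$ into those for which $(f(a),g(b))\bmod p$ lies in $\F_p^*\times\F_p^*$ and the rest. The contribution of pairs with $f(a)\equiv 0$ or $g(b)\equiv 0\pmod p$ is crude to bound: by Lemma~\ref{lem:Char Incompl} (applied with the principal-character count, or simply by the polynomial root-counting bound), each congruence $f(a)\equiv 0\pmod p$ with $|a|\le A$ has $O(A/p+1)$ solutions, so the total number of such $(a,b)$ is $O((A/p+1)(2B+1)+(2A+1)(B/p+1))=O(AB/p+A+B)$, which is absorbed into the stated error term. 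The same bound absorbs the points at infinity / curves with singular reduction.

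Next, for the pairs with $(f(a),g(b))\in\F_p^*\times\F_p^*$, observe that by the isomorphism criterion~\eqref{eq:isom cond} the pair $(f(a),g(b))$ lies in $\cS$ if and only if it is equivalent over $\F_p$ to some $(r,s)\in\cS$; moreover by hypothesis $\cS$ is a union of isomorphism classes intersected with $\F_p^*\times\F_p^*$. For each fixed $(r,s)\in\cS$, the number of $(a,b)$ in range with $(f(a),g(b))\equiv(ru^4,su^6)\pmod p$ for some $u\in\F_p^*$ is, by definition, exactly the number of triples $(u,a,b)$ in $\cZ_{r,s}(A,B;p)$ divided by the number of $u$ giving the same pair $(ru^4,su^6)$. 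Since the stabilizer of a pair $(a,b)\in\F_p^*\times\F_p^*$ under $u\mapsto(ru^4,su^6)$ has order $\gcd(p-1,\gcd(4,6))=\gcd(p-1,2)=2$ (for $p>3$), each pair is hit by exactly $2$ values of $u$ when it is hit at all. Hence summing over the $(r,s)\in\cS$ counts each qualifying pair $(a,b)$ exactly $(p-1)/2$ times — once for each of the $(p-1)/2$ curves $\E_{r,s}$ isomorphic to $\E_{f(a),g(b)}$ — and in that sum the multiplicity $2$ from the stabilizer exactly cancels against... wait: more carefully, $\sum_{(r,s)\in\cS}\#\cZ_{r,s}(A,B;p)$ counts each triple $(u,a,b)$ with $(ru^4,su^6)=(f(a),g(b))$, $(r,s)\in\cS$; for a fixed qualifying $(a,b)$, the pairs $(r,s)$ with $\E_{r,s}\cong\E_{f(a),g(b)}$ number $(p-1)/2$, and for each such $(r,s)$ there are exactly $2$ values of $u$. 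So the triple sum equals $(p-1)$ times the number of qualifying $(a,b)$, which is precisely $M_p(\cS,A,B)$ up to the error already accounted for. Dividing by $p-1$ gives the claimed identity.

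The one genuinely delicate point is the stabilizer computation and the insistence that $\cS\subseteq\F_p^*\times\F_p^*$, which is exactly why the lemma restricts to such $\cS$: on the boundary, pairs with a zero coordinate have larger automorphism groups (orders $4$, $6$, or $12$), the map $u\mapsto(ru^4,su^6)$ degenerates, and the clean factor $(p-1)$ fails — but those pairs were already discarded into the $O(AB/p+A+B)$ term. I would also remark that the hypothesis that $f,g$ are not powers of another polynomial over $\Q$ is not actually needed for this counting lemma itself; it is recorded here only because it is the standing hypothesis under which $\cZ_{r,s}$ is later estimated via Lemma~\ref{lem:ZrsAB}. So the real content of the proof is bookkeeping: isolate the boundary, check the $2$-to-$1$ nature of $u\mapsto(ru^4,su^6)$ on $\F_p^*\times\F_p^*$, and match multiplicities.
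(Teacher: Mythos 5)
Your proof is correct and follows the same route as the paper: bound the boundary pairs (where $f(a)g(b)(4f(a)^3+27g(b)^2)\equiv 0\pmod p$) trivially to get the $O(AB/p+A+B)$ error, and observe that each remaining pair $(a,b)$ with $(f(a),g(b))$ in the class union $\cS$ is counted exactly $p-1$ times in $\sum_{(r,s)\in\cS}\#\cZ_{r,s}(A,B;p)$. The paper states the $p-1$ count directly (each $u\in\F_p^*$ determines a unique $(r,s)=(f(a)u^{-4},g(b)u^{-6})$), whereas you reached it via the slightly longer detour of $(p-1)/2$ isomorphic pairs times the 2-element stabilizer, but the bookkeeping matches and your side remark that the "not a power" hypothesis is not actually used in this lemma is also accurate.
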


\begin{proof} We estimate the contribution from the 
curves with 
$$
f(a)g(b)\(4f(a)^3+27g(b)^2\) \equiv 0 \pmod p
$$
trivially as 
$$O\(\(A/p+1\)B + A\(B/p+1)\)\)  = O\(AB/p + A + B\).
$$
We also note that if $a \equiv ru^4 \pmod p$ and 
$b\equiv su^6 \pmod p$  then each group
$\E_{f(a),g(b)}(\Fp)$ with $|a|\le A$ and $|b|\le B$ is counted
precisely $p-1$ times in the sum on the right-hand side.
\end{proof}

\subsection{Statistics of Elliptic Curves}
\label{sec:Ell Curves}

Let $\cR_p(t)$ be be the set of pairs $(r,s)\in
\F_p^*\times\F_p^*$ such that
$$
\#\E_{r,s}\(\F_p\)  = p+1 - t.
$$

We recall the following well know estimate, see, for example,~\cite[Proposition~1.9]{Len}.

\begin{lemma}
\label{lem:LT Upper} For any fixed $t$,
$$
\cR_p(t) \ll p^{3/2 + o(1)}. 
$$
\end{lemma}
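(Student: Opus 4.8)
The plan is to reduce the count of pairs $(r,s) \in \F_p^* \times \F_p^*$ with $\#\E_{r,s}(\F_p) = p+1-t$ to a count of \emph{isomorphism classes} with that trace, using the orbit structure described in Section~\ref{sec:Isom}. The key observation is that, by the isomorphism criterion~\eqref{eq:isom cond}, the group $\F_p^*$ acts on $\F_p^* \times \F_p^*$ by $u \cdot (r,s) = (r u^4, s u^6)$, and along each orbit the quantity $\#\E_{r,s}(\F_p)$ — hence the trace $t = p+1 - \#\E_{r,s}(\F_p)$ — is constant. For $(r,s) \in \F_p^* \times \F_p^*$ each orbit has size exactly $(p-1)/\gcd(p-1, \gcd(4,6)) = (p-1)/2$ (the stabiliser consists of $u$ with $u^4 = u^6 = 1$, i.e. $u^2 = 1$). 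Therefore $\#\cR_p(t) = \frac{p-1}{2} \cdot N_p(t)$, where $N_p(t)$ is the number of $\F_p$-isomorphism classes of elliptic curves over $\F_p$ (among those with $j \neq 0, 1728$, which is all but $O(1)$ classes) whose trace of Frobenius equals $t$.

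The next step is to bound $N_p(t)$ by $p^{1/2+o(1)}$. This is the Deuring correspondence / Hurwitz class number estimate: for $|t| < 2\sqrt p$ with $t \neq 0$ (the ordinary case), the number of isomorphism classes of elliptic curves over $\F_p$ with trace $t$ equals the Kronecker class number $H(t^2 - 4p)$, and one has the classical bound $H(t^2 - 4p) \ll |t^2 - 4p|^{1/2} \log|t^2-4p| \ll p^{1/2+o(1)}$ via the class number formula $h(D) \ll |D|^{1/2}\log|D|$ together with the bounded number of orders containing the maximal order. The supersingular case $t = 0$ (when $p \equiv 3 \bmod 4$) or $|t| = \sqrt p$, etc., contributes only $O(p)$ pairs total since there are $O(p)$ supersingular $j$-invariants worth of classes — in fact $O(1)$ supersingular classes — so this case is negligible. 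Combining, $\#\cR_p(t) \ll (p-1) \cdot p^{1/2+o(1)} \ll p^{3/2+o(1)}$.

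The main obstacle is simply invoking the right black box cleanly: one must be slightly careful about the distinction between $\F_p$-isomorphism classes of \emph{Weierstrass equations} $Y^2 = X^3 + rX + s$ (which is what $\cR_p(t)$ counts, up to the twist action above) versus isomorphism classes of elliptic curves, and about the special $j$-invariants $0$ and $1728$ where orbits are shorter and extra automorphisms appear; but since those account for only $O(1)$ values of $j$ and hence $O(p)$ pairs $(r,s)$, they are absorbed into the error. Given the paper only needs the statement up to $p^{o(1)}$, the cleanest route is to cite the weighted count of Deuring (as packaged in~\cite[Proposition~1.9]{Len}, to which the statement already refers) directly: $\sum_{(r,s)} 1 = \frac{p-1}{2} H(t^2-4p) + O(p)$ and then apply $H(t^2-4p) \le p^{1/2+o(1)}$. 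So in practice the ``proof'' is a two-line dereferencing of Lenstra's proposition plus the elementary class-number bound, and no genuinely new argument is required.
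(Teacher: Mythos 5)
The paper itself offers no proof of this lemma: it is stated as a ``well known estimate'' and simply cited to \cite[Proposition~1.9]{Len}. Your argument correctly unpacks what lies behind that citation --- the orbit counting reduces $\#\cR_p(t)$ to $\frac{p-1}{2}$ times the number of $\F_p$-isomorphism classes with trace $t$ (away from $j=0,1728$, which contribute $O(p)$), and Deuring's correspondence together with the class-number bound $H(t^2-4p)\ll p^{1/2+o(1)}$ finishes it --- so this is the same route, just written out.

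One small inaccuracy worth flagging: your aside that the supersingular case ``contributes only $O(p)$ pairs total'' because there are ``in fact $O(1)$ supersingular classes'' is not right. The number of $\F_p$-isomorphism classes of supersingular curves over $\F_p$ is itself a class number of size $p^{1/2+o(1)}$, so the supersingular contribution to $\#\cR_p(t)$ is genuinely of order $p^{3/2+o(1)}$, not $O(p)$. This does not break the proof --- the class-number bound you invoke for the ordinary case covers $t=0$ and $p\mid t$ just as well, so the conclusion $\#\cR_p(t)\ll p^{3/2+o(1)}$ stands --- but the supersingular case is not negligible; it is at the same scale as the stated bound and should simply be handled by the same class-number estimate rather than dismissed.
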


We also define 
$$
\li_{1/2}(x) = \int_{2}^x \frac{d\, z}{2 z^{1/2} \log z} = \(1 + o(1)\) \frac{x^{1/2}}{\log x}.
$$

By a result of David and Pappalardi~\cite[Equations~(24) and (29)]{DavPapp1}
(see also~\cite[Equations~(2.1) and Lemma~3]{Baier1}) we have

\begin{lemma}
\label{lem:LT Stat} For any fixed integer $t$, there exists a constant $C(t)> 0$ such that 
for any fixed $C > 0$ 
$$
\sum_{p \le x} \frac{1}{p^2} \# \cR_p(t)  
= C(t)\li_{1/2}(x) + O\(x^{1/2} (\log x)^{-C}\).
$$
\end{lemma}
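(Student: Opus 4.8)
This is a result of David and Pappalardi~\cite{DavPapp1}, in the sharper form later obtained by Baier~\cite{Baier1}; below I only describe the mechanism, the detailed analytic bookkeeping being carried out in those papers. The plan is to pass from reductions of curves to Hurwitz class numbers, and thence to values at $s=1$ of quadratic Dirichlet $L$-functions. Every pair $(r,s)\in\Fp^*\times\Fp^*$ represents a curve with $j$-invariant $\ne 0,1728$, and the stabiliser of $(r,s)$ under $(r,s)\mapsto(ru^4,su^6)$ is $\{\pm1\}$, so each isomorphism class of such curves over $\F_p$ is represented by exactly $(p-1)/2$ pairs; hence $\#\cR_p(t)=\tfrac{p-1}{2}\,N_p(t)$, where $N_p(t)$ is the number of isomorphism classes of elliptic curves over $\F_p$ with trace of Frobenius $t$ and $j$-invariant $\ne 0,1728$. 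By Deuring's theorem, for $p\nmid t$ and $t^2<4p$ one has $N_p(t)=H(4p-t^2)+O(1)$, where $H(\cdot)$ is the Hurwitz--Kronecker class number and the $O(1)$ absorbs the boundedly many classes with $j\in\{0,1728\}$; together with the trivial bound $H(4p-t^2)\ll p^{1/2+o(1)}$ this yields
$$
\frac{1}{p^2}\#\cR_p(t)=\frac{H(4p-t^2)}{2p}+O\(p^{-1}\log p\),
$$
and the error term contributes only $O(\log\log x)$ to the sum over $p\le x$.

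Next I would apply the Dirichlet class number formula, which writes $H(4p-t^2)$ as $\tfrac1\pi\sqrt{4p-t^2}\,L(1,\chi_{t^2-4p})$ times a bounded, explicitly computable factor supported on the primes dividing the conductor of $t^2-4p$, where $\chi_{t^2-4p}=\(\tfrac{t^2-4p}{\,\cdot\,}\)$ is the Kronecker symbol. Since $\sqrt{4p-t^2}/(2p)=p^{-1/2}\(1+O(t^2/p)\)$, the problem reduces to an asymptotic formula for $\tfrac1\pi\sum_{p\le x}p^{-1/2}\lambda_t(p)$, where $\lambda_t(p)$ is $L(1,\chi_{t^2-4p})$ times the above conductor correction. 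One expands $\lambda_t(p)=\sum_{n\ge1}\gamma_n(p)/n$, the coefficients $\gamma_n(p)$ being multiplicative in $n$, bounded by $n^{o(1)}$, and, for each fixed $n$, depending on $p$ only through its residue class modulo $n^{O(1)}$. The terms with $n\le(\log x)^A$, for a suitable $A=A(C)$, are then handled by splitting the primes into residue classes and invoking the Siegel--Walfisz theorem, using $\sum_{p\le x}p^{-1/2}=2\li_{1/2}(x)+O\(x^{1/2}\exp(-c\sqrt{\log x})\)$ (partial summation from the prime number theorem). This produces the main term $C(t)\li_{1/2}(x)$ with $C(t)=\tfrac2\pi\sum_{n\ge1}\kappa_n/n$, $\kappa_n$ being the mean of $\gamma_n(p)$ over the admissible residues; a short local computation shows that $\sum_n\kappa_n/n$ is a convergent Euler product, each of whose local factors equals $1+O(1/q^2)$ and is positive, so that $C(t)>0$. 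The error incurred at this stage is $O\((\log x)^{O(1)}x^{1/2}\exp(-c\sqrt{\log x})\)=O\(x^{1/2}(\log x)^{-C}\)$.

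The step I expect to be the main obstacle is the tail $n>(\log x)^A$ of the Dirichlet series. For an individual prime $p$ this tail cannot be dropped termwise: P\'olya--Vinogradov gives only $\sum_{n>N}\gamma_n(p)/n\ll p^{1/2+o(1)}/N$, so this tail can only be bounded by $O\(x^{1+o(1)}/N\)$ in the sum over $p\le x$, far exceeding the target. One must exploit the averaging over $p$: via an approximate functional equation one truncates $L(1,\chi_{t^2-4p})$ to length $O(p^{1/2})$, and on the remaining range $(\log x)^A<n\ll x^{1/2}$ one applies the large sieve inequality for character sums over primes to $\sum_{p\le x}p^{-1/2}\(\tfrac{t^2-4p}{n}\)$, which is non-principal as a character in $p$ unless $n$ is a perfect square; after Cauchy--Schwarz the weight $n^{-1}$ supplies the decisive factor $(\log x)^{-A/2}$, the perfect-square values of $n$ merely completing the constant $C(t)$ already identified. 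It is this large-sieve argument, rather than reliance on Siegel--Walfisz alone, that forces the error down to a fixed negative power of $\log x$; it is carried out in~\cite[Equations~(24) and (29)]{DavPapp1} and~\cite[Lemma~3]{Baier1}.
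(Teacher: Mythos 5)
The paper gives no proof of this lemma --- it only cites David--Pappalardi and Baier --- and you correctly recognise the statement as theirs and cite the same sources, so there is no alternative argument in the paper to compare against. Your sketch of the mechanism (the orbit count $\#\cR_p(t)=\tfrac{p-1}{2}N_p(t)$, Deuring's theorem passing to the Hurwitz class number $H(4p-t^2)$, the class number formula reducing this to $L(1,\chi_{t^2-4p})$, Siegel--Walfisz for the moduli $n\le(\log x)^A$, and the large sieve over primes to control the tail of the truncated $L$-series) faithfully reproduces the structure of the proof in those papers, and correctly identifies the large-sieve step as what delivers the $(\log x)^{-C}$ error term.
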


Let $\cT_p(\alpha,\beta)$ be the set of pairs $(r,s)\in
\F_p^*\times\F_p^*$ such that the inequalities $ \alpha \le
\psi_{r,s}(p) \le \beta$ hold, where the angles 
$$\psi_{r,s}(p) = \psi(\E_{r,s};p)
$$
are given by~\eqref{eq:ST angle}. 
It is natural to expect  that
$$
\# \cT_p(\alpha,\beta) \sim \mu_{\tt ST}(\alpha,\beta) p^2
$$
as $p\to\infty$, 
where $\mu_{\tt ST}(\alpha,\beta)$ is given by~\eqref{eq:ST dens}, 
which is known as the Sato--Tate conjecture in \emph{the vertical aspect}.
It has been established  by  Birch~\cite{Birch} (see also~\cite{MilMur}),
but here we require a stronger result What is needed is a full
analogue for the Sato--Tate density of the bound of
Niederreiter~\cite{Nied} on the discrepancy of the distribution of
values of  Kloosterman sums. Fortunately, such a
result can be obtained using the same methods since all of the
underlying tools, namely~\cite[Lemma~3]{Nied}
and~\cite[Theorem~13.5.3]{Katz}, apply to $\psi_{r,s}(p) $  as
well as to values of Kloosterman sums. In particular,
from~\cite[Theorem~13.5.3]{Katz} it follows that
\begin{equation}
\label{eq:Katz bound}
\frac{1}{(p-1)^2} \sum_{\substack{r,s\in \F_p^*\\4r^3+27s^2 \ne
0}} \frac{\sin\((n+1)\psi_{r,s}(p)\)}{\sin\( \psi_{r,s}(p)\)} \ll
np^{-1/2},  \qquad n =1,2, \ldots\,,
\end{equation}
(see also the work of Fisher~\cite[Section~5]{Fish}). Thus, as
in~\cite{Nied}, we have:

\begin{lemma}
\label{lem:ST Stat} For any prime $p$, we have
$$
\max_{0 \le \alpha < \beta \le \pi} \left|\# \cT_p(\alpha,\beta) -
\mu_{\tt ST}(\alpha,\beta) p^2 \right| \ll p^{7/4}.
$$
\end{lemma}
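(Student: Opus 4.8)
The plan is to deduce Lemma~\ref{lem:ST Stat} from the moment bound~\eqref{eq:Katz bound} by the standard Erd\H{o}s--Tur\'an / Weyl-type discrepancy argument, exactly as Niederreiter~\cite{Nied} does for Kloosterman angles. The key observation is that the quantities
$$
U_n(\psi) = \frac{\sin\((n+1)\psi\)}{\sin\psi}
$$
are (up to normalisation) the Chebyshev polynomials of the second kind $U_n(\cos\psi)$, and they form the orthonormal basis for the Sato--Tate measure $d\mu_{\tt ST} = \frac{2}{\pi}\sin^2\psi\,d\psi$ on $[0,\pi]$. So~\eqref{eq:Katz bound} says precisely that the first $n$ ``Fourier--Chebyshev coefficients'' of the empirical measure attached to the family $\{\psi_{r,s}(p)\}$ deviate from those of $\mu_{\tt ST}$ by $O(np^{-1/2})$ each, uniformly in $n$.

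First I would set up the discrepancy framework: for the empirical distribution function $F_p(\gamma) = (p-1)^{-2}\#\{(r,s): 4r^3+27s^2\ne0,\ \psi_{r,s}(p)\le\gamma\}$ and the Sato--Tate distribution function $G(\gamma) = \mu_{\tt ST}(0,\gamma)$, the quantity to be bounded is $(p-1)^2 \sup_\gamma |F_p(\gamma) - G(\gamma)|$, and one checks that $\#\cT_p(\alpha,\beta) - \mu_{\tt ST}(\alpha,\beta)p^2$ differs from $(p-1)^2(F_p(\beta)-F_p(\alpha) - (G(\beta)-G(\alpha)))$ by only $O(p)$ (absorbing the $(p-1)^2$ versus $p^2$ discrepancy and the $O(1)$ pairs with $4r^3+27s^2=0$, which is harmless at the $p^{7/4}$ level). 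Then I would invoke the Erd\H{o}s--Tur\'an inequality for the interval $[0,\pi]$ with respect to the weight $\sin^2$: the discrepancy $\sup_\gamma|F_p(\gamma)-G(\gamma)|$ is bounded by $O(1/N) + O\(\sum_{n=1}^{N} \frac1n \cdot |c_n|\)$ where $c_n$ is the $n$-th mixed moment appearing in~\eqref{eq:Katz bound}. Substituting $|c_n| \ll n p^{-1/2}$ gives a bound $O(1/N) + O(N p^{-1/2})$, and optimising with $N \asymp p^{1/4}$ yields $O(p^{-1/4})$. Multiplying by $(p-1)^2$ produces the claimed $O(p^{7/4})$.

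The main obstacle — really the only non-bookkeeping point — is justifying that the Erd\H{o}s--Tur\'an machinery applies with the Sato--Tate weight in place of Lebesgue measure on a circle. In Niederreiter's Kloosterman setting one works on $[0,\pi]$ with the same measure and the same $U_n$ basis, so the adaptation is purely formal; the cleanest route is to push the measure forward under $\gamma\mapsto\cos\gamma$ to the interval $[-1,1]$ with the semicircle weight $\frac{2}{\pi}\sqrt{1-x^2}\,dx$, where the $U_n$ are literally the orthogonal polynomials, and then quote the Erd\H{o}s--Tur\'an--Koksma inequality in the form valid for measures on a compact interval whose orthonormal polynomials have uniformly bounded sup-norm growth (here $\|U_n\|_\infty = n+1$, which is exactly why the factor $n$ in~\eqref{eq:Katz bound} is affordable). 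I would simply cite~\cite[Lemma~3]{Nied} and~\cite[Theorem~13.5.3]{Katz} for this, as the statement of the lemma already does, and note that nothing beyond what is in~\cite{Nied} is needed once~\eqref{eq:Katz bound} is in hand.

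Finally, one should double-check the exponent: the trade-off $1/N$ versus $Np^{-1/2}$ balances at $N = p^{1/4}$, giving discrepancy $\ll p^{-1/4}$, hence $\#\cT_p(\alpha,\beta) - \mu_{\tt ST}(\alpha,\beta)p^2 \ll p^2 \cdot p^{-1/4} = p^{7/4}$, which matches the statement; the error terms from replacing $(p-1)^2$ by $p^2$ (of size $O(p)$) and from the Erd\H{o}s--Tur\'an constant term are all $o(p^{7/4})$ and so are swallowed. Since all of this is routine given~\eqref{eq:Katz bound}, the write-up can be kept to a few lines, exactly as the paper does.
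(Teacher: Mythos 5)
Your proposal is correct and follows exactly the route the paper takes: feed the moment bound~\eqref{eq:Katz bound} (from~\cite[Theorem~13.5.3]{Katz}) into the Erd\H{o}s--Tur\'an--type discrepancy inequality~\cite[Lemma~3]{Nied}, which Niederreiter already sets up for the Sato--Tate measure, and optimise the cutoff at $k\asymp p^{1/4}$ to obtain discrepancy $\ll p^{-1/4}$, hence $\ll p^{7/4}$ after multiplying by $(p-1)^2$ (with the $(p-1)^2$ versus $p^2$ adjustment of size $O(p)$ absorbed). The paper states this only as a citation, whereas the parallel one-parameter argument in the proof of Lemma~\ref{lem:ST Stat Poly} is written out explicitly and matches your computation line for line.
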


Michel~\cite[Proposition~1.1]{Mich} gives a version 
of~\eqref{eq:Katz bound} for one parametric polynomial families of 
curves, where the sums is also twisted by additive characters. 

\begin{lemma}
\label{lem:Mich bound} For any prime  $p$ and uniformly over  all 
integers $m$, for any polynomials
$f(T),g(T) \in \Z[T]$,   we have
$$
\frac{1}{p} \sum_{\substack{a \in \F_p\\
4f(a)^3 +27g^2(a)\not \equiv 0 \pmod p}} 
\frac{\sin\((n+1)\psi_{f(a), g(a)}(p)\)}{\sin\( \psi_{f(a), g(a)}(p)\)}  \exp\(2\pi i
\frac{m u}{  p}\) \ll
np^{-1/2},  
$$
for $n =1,2, \ldots$.
\end{lemma}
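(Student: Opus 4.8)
The plan is to interpret the summand cohomologically and then apply Deligne's bounds together with the Euler--Poincar\'e formula; this is essentially the argument behind Michel's proposition. We may assume $p$ is large (the finitely many bounded primes are trivial since $\left|\sin((n+1)\psi)/\sin\psi\right|\le n+1$). Set $q(T)=4f(T)^3+27g(T)^2\in\Z[T]$, let $U\subseteq\A^1$ be the open curve over $\Fp$ on which $q(a)\ne0$, and let $\cH$ be the lisse $\overline{\Q}_\ell$-sheaf of rank $2$ on $U$ coming from the first relative \'etale cohomology of the family $Y^2=X^3+f(a)X+g(a)$, normalised to be pure of weight $1$, so that $\mathrm{tr}(\mathrm{Frob}_a\mid\cH)=p+1-\#\E_{f(a),g(a)}(\Fp)=2\sqrt p\cos\psi_{f(a),g(a)}(p)$, with Frobenius eigenvalues $\sqrt p\,e^{\pm i\psi}$. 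Then $\mathrm{Sym}^n\cH$ has rank $n+1$, is pure of weight $n$, and $\mathrm{tr}(\mathrm{Frob}_a\mid\mathrm{Sym}^n\cH)=p^{n/2}\sin((n+1)\psi_{f(a),g(a)}(p))/\sin(\psi_{f(a),g(a)}(p))$. Writing $\cL$ for the Artin--Schreier sheaf of rank $1$ and weight $0$ with $\mathrm{tr}(\mathrm{Frob}_a\mid\cL)=\exp(2\pi i ma/p)$, the sum in the lemma equals $p^{-n/2}\sum_{a\in U(\Fp)}\mathrm{tr}(\mathrm{Frob}_a\mid\cF)$ with $\cF=\mathrm{Sym}^n\cH\otimes\cL$, and by the Grothendieck--Lefschetz trace formula this equals $p^{-n/2}\sum_{i=0}^{2}(-1)^i\mathrm{tr}(\mathrm{Frob}\mid H^i_c(U_{\overline{\Fp}},\cF))$.

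Next I would show that only $H^1_c$ contributes. One has $H^0_c(U_{\overline{\Fp}},\cF)=0$ because $U$ is a connected non-proper curve, and $H^2_c(U_{\overline{\Fp}},\cF)=0$ as follows: when $m\ne0$ the sheaf $\cL$ is wildly ramified at $\infty$, so $\cF$ has no nonzero geometric coinvariants; when $m=0$ one uses the natural non-isotriviality hypothesis on $f,g$ (non-constancy of the $j$-invariant $1728\cdot4f^3/q$), which by Katz's theorem forces the geometric monodromy group of $\cH$ to be $\mathrm{SL}_2$, whence $\mathrm{Sym}^n\cH$ is geometrically irreducible and nontrivial for $n\ge1$ and again has no coinvariants. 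Thus the sum equals $-p^{-n/2}\mathrm{tr}(\mathrm{Frob}\mid H^1_c(U_{\overline{\Fp}},\cF))$, and since $\cF$ is pure of weight $n$, Deligne's Weil~II bounds every Frobenius eigenvalue on $H^1_c$ by $p^{(n+1)/2}$. Hence the sum is at most $\dim H^1_c(U_{\overline{\Fp}},\cF)\cdot p^{-n/2}p^{(n+1)/2}=\dim H^1_c(U_{\overline{\Fp}},\cF)\cdot p^{1/2}$, so after dividing by $p$ it remains only to prove $\dim H^1_c(U_{\overline{\Fp}},\cF)\ll n$, uniformly in $m$ and $p$.

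For this I would apply the Euler--Poincar\'e (Grothendieck--Ogg--Shafarevich) formula: for the lisse sheaf $\cF$ of rank $n+1$ on $U$, with $S=\P^1\setminus U$, one has $\chi_c(U_{\overline{\Fp}},\cF)=(n+1)\,\chi_c(U_{\overline{\Fp}})-\sum_{x\in S}\mathrm{Swan}_x(\cF)$ and $\chi_c(U_{\overline{\Fp}})=2-\#S$. For $p>3$ the sheaf $\cH$ is everywhere tamely ramified (by the reduction theory of elliptic curves: potentially good or potentially multiplicative reduction, acquired over a tame extension), so $\mathrm{Sym}^n\cH$ has vanishing Swan conductor at every point, while $\cL$ contributes $\mathrm{Swan}_\infty(\cF)\le n+1$ and $\mathrm{Swan}_x(\cF)=0$ at the finite points of $S$; since $\#S\le\deg q+1$ this gives $\dim H^1_c(U_{\overline{\Fp}},\cF)=-\chi_c(U_{\overline{\Fp}},\cF)\le(n+1)\deg q\ll n$ with an implied constant depending only on $\deg f$ and $\deg g$. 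The main obstacle is precisely this monodromy input --- establishing, uniformly in $p$, that $\cH$ is tamely ramified at the zeros of $q$ and at $\infty$ (so the Swan conductors of $\mathrm{Sym}^n\cH\otimes\cL$ stay linear in $n$ and bounded in $p$), together with the determination of the geometric monodromy group of $\cH$ needed to kill $H^2_c$ in the untwisted case $m=0$; this is where the hypothesis on $f$ and $g$ genuinely enters, and it is the substance of Michel's argument.
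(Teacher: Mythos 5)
The paper itself gives no proof of this lemma: it is quoted directly as a special case of Michel's Proposition~1.1, so there is no written argument of the paper's to set yours against. Your cohomological sketch is the standard route underlying Michel's statement and is essentially correct: form the rank-$2$ lisse sheaf $\cH$ on the open locus $U$ where $4f^3+27g^2\ne 0$, twist $\mathrm{Sym}^n\cH$ by the Artin--Schreier sheaf for $\exp(2\pi i ma/p)$, apply the Grothendieck--Lefschetz trace formula together with Deligne's Weil~II purity on $H^1_c$, and bound $\dim H^1_c$ via Grothendieck--Ogg--Shafarevich, using that $\cH$ is tame for $p>3$ while the Artin--Schreier twist contributes Swan conductor at most $n+1$ at infinity, so $\dim H^1_c$ is linear in $n$ with the implied constant depending only on $\deg f$, $\deg g$.

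The hypothesis you single out as the substantive monodromy input is in fact absent from the paper's statement, and your sketch is more careful than the source on this point. Killing $H^2_c$ when $m=0$ requires $\mathrm{Sym}^n\cH$ to have no geometric coinvariants, which forces the geometric monodromy of $\cH$ to be $\mathrm{SL}_2$ and hence the $j$-invariant $1728\cdot 4f^3/(4f^3+27g^2)$ to be non-constant. The lemma as written (``for any polynomials $f(T),g(T)\in\Z[T]$'') omits this, and the downstream Lemma~\ref{lem:ST Stat Poly} and Theorem~\ref{thm:ST Aver Onepar} impose only $4f(T)^3+27g(T)^2\not\equiv 0$, which is strictly weaker. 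For example $f=T^2$, $g=T^3$ has $4f^3+27g^2=31T^6\not\equiv 0$, yet $Y^2=X^3+a^2X+a^3$ is the quadratic twist by $a$ of a fixed curve, so $\psi_{f(a),g(a)}(p)$ equals $\psi_0$ or $\pi-\psi_0$ according to the Legendre symbol of $a$; for even $n$ the $m=0$ sum is then $(p+O(1))\sin((n+1)\psi_0)/\sin\psi_0$, of size comparable to $p$ rather than $O(n\sqrt p)$, so the bound as stated fails. Michel's Proposition~1.1 carries precisely a non-isotriviality hypothesis of the kind you invoke, and that hypothesis needs to be restored in the lemma and propagated to the later one-parametric results. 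Beyond this, the only step I would word with more care is the tameness of $\cH$ at the bad fibres for $p>3$ (so that $\mathrm{Swan}_x(\mathrm{Sym}^n\cH)=0$ at finite $x$ uniformly in $p$), which uses that the family acquires semistable reduction over a tame cover, but you already name this as part of the monodromy input.
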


Again, using  the standard reduction between complete and incomplete sums
(see~\cite[Section~12.2]{IwKow}),  we see that Lemma~\ref{lem:Mich bound}
implies the following result.

\begin{lemma}
\label{lem:Mich bound Incomp} For any prime  $p$,  integer $A\ge 1$ and  polynomials
$f(T),g(T) \in \Z[T]$,   we have
$$
\frac{1}{p} \sum_{\substack{|a| \le A \\
4f(a)^3 +27g^2(a)\not \equiv 0 \pmod p}} 
\frac{\sin\((n+1)\psi_{f(a), g(a)}(p)\)}{\sin\( \psi_{f(a), g(a)}(p)\)}  \ll
np^{-1/2},  
$$
for $n =1,2, \ldots$. 
\end{lemma}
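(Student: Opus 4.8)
The plan is to derive Lemma~\ref{lem:Mich bound Incomp} from Lemma~\ref{lem:Mich bound} by the classical completion technique for incomplete sums described in~\cite[Section~12.2]{IwKow}. Writing
$$
S(a) = \frac{\sin\((n+1)\psi_{f(a), g(a)}(p)\)}{\sin\( \psi_{f(a), g(a)}(p)\)}
$$
for those $a$ with $4f(a)^3 +27g^2(a)\not \equiv 0 \pmod p$ (and $S(a)=0$ otherwise), the first step is to express the indicator of the interval $|a| \le A$ modulo $p$ by additive characters: for any $v \in \F_p$,
$$
\sum_{|a| \le A} \mathbf{1}[a \equiv v \pmod p] = \frac{1}{p} \sum_{m=0}^{p-1} \sum_{|a|\le A} \exp\(2\pi i \frac{m(a - v)}{p}\).
$$
Substituting this into $\sum_{|a|\le A} S(a)$ and swapping the order of summation converts the incomplete sum into a linear combination, over $m$, of the complete twisted sums bounded in Lemma~\ref{lem:Mich bound}, with coefficients equal to the Fourier coefficients $\widehat{I}(m) = \frac{1}{p}\sum_{|a|\le A} \exp(-2\pi i m a/p)$ of the interval.

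The second step is the standard estimate $\sum_{m=0}^{p-1} |\widehat{I}(m)| \ll \log p$ (more precisely, the $m=0$ term contributes $(2A+1)/p$ and the remaining terms are $\ll \min(1, 1/\|m/p\|)/p$, whose sum over $m$ is $\ll \log p$). Bounding each complete sum by $O(n p^{1/2})$ via Lemma~\ref{lem:Mich bound} then gives
$$
\left| \sum_{|a|\le A} S(a) \right| \ll n p^{1/2} \log p,
$$
and dividing by $p$ yields $\frac{1}{p}\sum_{|a|\le A} S(a) \ll n p^{-1/2}\log p$. Strictly speaking this produces an extra factor $\log p$ compared with the clean statement; since the ambient results (Lemmas~\ref{lem:Char Incompl} and \ref{lem:ZsB}) all carry such logarithmic losses absorbed into $p^{o(1)}$, and the final applications only use this bound up to $p^{o(1)}$ factors anyway, I would either absorb the $\log p$ into an implied $p^{o(1)}$ or simply remark that the displayed form should be read with that understanding; alternatively one restricts attention to $A \le p$, where the count of relevant $a$ is at most $2A+1 \le 2p+1$ and the trivial bound already gives something comparable, so the interesting regime is genuinely $A$ large and the completion argument is the right tool.

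The only mild subtlety — and the place where a careless argument could go wrong — is keeping track of the normalization: Lemma~\ref{lem:Mich bound} already carries a factor $1/p$ in front of the complete sum over $a \in \F_p$, so in the completion identity one must be careful that the complete sum appearing is exactly $\sum_{a\in\F_p} S(a)\exp(2\pi i m a/p)$, i.e.\ $p$ times the quantity bounded in Lemma~\ref{lem:Mich bound}, and the prefactor $1/p$ from the character expansion of the indicator then combines correctly to leave a single overall $1/p$. There is also the harmless point that Lemma~\ref{lem:Mich bound} is stated with a variable $u$ in the exponential that does not appear elsewhere; I read this as a typo for $a$, and use it in the form with $\exp(2\pi i m a/p)$. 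Beyond that, the argument is entirely routine and I do not anticipate any real obstacle.
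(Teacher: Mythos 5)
Your proposal is correct and is exactly the route the paper takes: the paper gives no explicit proof, merely invoking ``the standard reduction between complete and incomplete sums (see~\cite[Section~12.2]{IwKow})'' applied to Lemma~\ref{lem:Mich bound}, which is precisely the completion-by-additive-characters argument you spell out. Your observation that the argument genuinely produces an extra factor of $\log p$ is also correct, and in fact the paper's own Lemma~\ref{lem:Char Incompl}, obtained by the identical completion technique, does carry the $\log p$; its absence in Lemma~\ref{lem:Mich bound Incomp} looks like an oversight, but as you note it is harmless since the downstream uses (in Lemma~\ref{lem:ST Stat Poly} and Theorem~\ref{thm:ST Aver Onepar}) only require the bound up to factors absorbed into $p^{o(1)}$ or $x^{-\delta}$. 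Your reading of the stray variable $u$ in Lemma~\ref{lem:Mich bound} as a typo for $a$ is also the intended one.
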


Let $\cT_{f,g,p}(A;\alpha,\beta)$ be the set of integers $a$ 
with $|a| \le A$ and  
such that the inequalities $ \alpha \le
\psi_{f(a), g(a)}(p) \le \beta$ hold, where, as before, the angles 
$$\psi_{f(a), g(a)}(p) = \psi(\E_{f(a), g(a)};p)
$$
are given by~\eqref{eq:ST angle}. 

Applying the technique of Niederreiter~\cite{Nied}  we instantly obtain 
the following analogue of Lemma~\ref{lem:ST Stat}. 

\begin{lemma}
\label{lem:ST Stat Poly} For any prime  $p$, positive integer $A < p/2$  and  polynomials
$f(T),g(T) \in Z[T]$ such that $4f(T)^3 +27g(T)^2$ is not identical 
to zero,   we have 
$$
\max_{0 \le \alpha < \beta \le \pi} \left|\# \cT_{f,g,p}(A;\alpha,\beta) -
2\mu_{\tt ST}(\alpha,\beta) A \right| \ll A^{1/2}p^{1/4}.
$$
\end{lemma}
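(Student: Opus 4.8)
The plan is to run the Erdős--Turán / Niederreiter discrepancy machinery, exactly as in~\cite{Nied}, but with the ambient ``vertical'' equidistribution statement~\eqref{eq:Katz bound} replaced by the one-parameter incomplete-sum bound of Lemma~\ref{lem:Mich bound Incomp}. Concretely, write $x_a = \psi_{f(a),g(a)}(p)/\pi \in [0,1]$ for those $a$ with $|a|\le A$ and $4f(a)^3+27g(a)^2\not\equiv 0\pmod p$ (there are at most $O(1)$ excluded values of $a$, since $4f(T)^3+27g(T)^2$ is not identically zero, and these contribute $O(1)$ to every count and can be absorbed into the error term). The quantity $\#\cT_{f,g,p}(A;\alpha,\beta) - 2\mu_{\tt ST}(\alpha,\beta)A$ is, up to the $O(1)$ from the excluded $a$'s and from the passage $2A+1 \to 2A$, the number of $x_a$ in $[\alpha/\pi,\beta/\pi]$ minus $(2A)$ times the Sato--Tate measure of that interval. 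So I need to bound the discrepancy of the sequence $(x_a)$ against the Sato--Tate density.

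The key step is the Fourier/Chebyshev input. The Sato--Tate density $\tfrac{2}{\pi}\sin^2\psi$ has the property that its ``Weyl sums'' against the orthonormal family $U_n$ (normalised Chebyshev polynomials of the second kind, i.e. $\sin((n+1)\psi)/\sin\psi$) vanish for $n\ge 1$; this is precisely why one measures equidistribution of $\psi_{r,s}(p)$ via the sums in~\eqref{eq:Katz bound}, and it is what makes the Erdős--Turán--type inequality in~\cite[Lemma~3]{Nied} applicable in the Sato--Tate setting. Applying that inequality with cut-off parameter $N$ gives
$$
\max_{0\le\alpha<\beta\le\pi}\left|\#\cT_{f,g,p}(A;\alpha,\beta) - 2\mu_{\tt ST}(\alpha,\beta)A\right|
\ll \frac{A}{N} + \sum_{n=1}^{N}\frac1n\left|\sum_{\substack{|a|\le A\\ 4f(a)^3+27g(a)^2\not\equiv0}}\frac{\sin((n+1)\psi_{f(a),g(a)}(p))}{\sin(\psi_{f(a),g(a)}(p))}\right| + O(1).
$$
Now Lemma~\ref{lem:Mich bound Incomp} bounds the inner sum by $O(np^{1/2})$ (the statement there reads ``$\ll np^{-1/2}$'' after dividing by $p$, so the sum itself is $\ll np^{1/2}$). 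Hence the sum over $n$ is $\ll \sum_{n\le N} p^{1/2} = Np^{1/2}$, and we get a total bound $\ll A/N + Np^{1/2}$. Optimising the cut-off by taking $N \asymp (A/p^{1/2})^{1/2} = A^{1/2}p^{-1/4}$ yields $\ll A^{1/2}p^{1/4}$, as claimed. (If $A^{1/2}p^{-1/4} < 1$, i.e. $A < p^{1/2}$, then $N=1$ and the bound $A/1 + p^{1/2} \ll p^{1/2} \le A^{1/2}p^{1/4}$ still holds, so there is nothing to check separately; the hypothesis $A<p/2$ is only used to make the reduction to incomplete sums clean and to ensure the $x_a$ are genuinely indexed by distinct residues.)

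The main obstacle — and the only genuinely substantive point — is that Lemma~\ref{lem:Mich bound Incomp}, which rests on Michel's~\cite[Proposition~1.1]{Mich} via the standard completion technique of~\cite[Section~12.2]{IwKow}, really does apply to $\sin((n+1)\psi)/\sin\psi$ for the one-parameter family $\E_{f(a),g(a)}$, with the stated uniformity in $n$ being only linear in $n$; this is exactly the analogue of~\eqref{eq:Katz bound} that one needs, and it is the place where the condition that $4f(T)^3+27g(T)^2$ not vanish identically (equivalently, that $\E_{f(a),g(a)}$ is a non-degenerate one-parameter family) enters. Everything else is the now-routine Erdős--Turán argument of Niederreiter, which I would simply cite rather than reproduce. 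It is worth remarking that this lemma needs no hypothesis that $f$ or $g$ fail to be powers of another polynomial — that assumption was needed for the two-parameter results via Lemma~\ref{lem:Char Incompl}, but here the relevant input is Michel's bound, which holds for arbitrary $f,g\in\Z[T]$ subject only to the non-degeneracy of the family.
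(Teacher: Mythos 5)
Your proof follows the same route as the paper's: apply the Erd\H{o}s--Tur\'an-type inequality from~\cite[Lemma~3]{Nied}, estimate the resulting Weyl sums via Lemma~\ref{lem:Mich bound Incomp}, and optimise the cutoff at $N \asymp A^{1/2}p^{-1/4}$; your remark that the ``not a power'' hypothesis is unnecessary here is also consistent with the lemma as stated. One small slip in your parenthetical about $A<p^{1/2}$: the inequality $p^{1/2}\le A^{1/2}p^{1/4}$ is false in that regime, but the edge case is still harmless because for $A<p^{1/2}$ the trivial discrepancy bound $O(A)$ already satisfies $A\le A^{1/2}p^{1/4}$.
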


\begin{proof} By~\cite[Lemma~3]{Nied} we see that for any integer $k$
\begin{equation*}
\begin{split}
\max_{0 \le \alpha < \beta \le \pi} &\left|\# \cT_{f,g,p}(A;\alpha,\beta) -
2\mu_{\tt ST}(\alpha,\beta) A \right| \\
&\ll   \frac{A}{k} +   \sum_{n=1}^k\frac{1}{n}
 \left|\sum_{\substack{|a| \le A \\
4f(a)^3 +27g^2(a)\not \equiv 0 \pmod p}} 
\frac{\sin\((n+1)\psi_{f(a), g(a)}(p)\)}{\sin\( \psi_{f(a), g(a)}(p)\)} \right|.
\end{split}
\end{equation*}
Applying Lemma~\ref{lem:Mich bound Incomp} and choosing 
$k = \rf{A^{1/2}p^{-1/4}}$, we obtain the desired bound. 
\end{proof}

We note that Lemma~\ref{lem:ST Stat Poly}
is a generalisation of~\cite[Theorem~1.4]{MilMur}
that corresponds to the case of $A = (p-1)/2$ that 
follows directly from Lemma~\ref{lem:Mich bound}
applied with $m = 0$.

\section{Main Results}
\label{sec:main}

\subsection{General Estimate}

We now have the following general results which can be applied
to various families  of elliptic curves.  
We formulate it in a more general form that we need for applications
to the Lang-Trotter or Sato--Tate conjectures. 

Let us define
\begin{equation*}
\begin{split}
E_\vartheta(U,V;z) =  UVz^{-1/2 -\vartheta/2 }  +  
UV^{1/2} z^{-\vartheta/2} + UV&z^{-1}+ U z^{1/2 - \vartheta} +
U \\
  +&   Vz^{1/2 -\vartheta/2}    + V^{1/2} z^{1-\vartheta/2}.
\end{split}
\end{equation*} 

\begin{theorem}
\label{thm:M and S} Suppose 
that $f(T), g(T) \in \Z[T]$ are not powers of another polynomial 
over $\Q$.  Assume that for a prime $p>3$ we are given
a set $\cS \subseteq \Fp^*\times\Fp^*$ of cardinality
$$
\# \cS \le p^{2-\vartheta+ o(1)}
$$
as $p\to \infty$, for some absolute constant $\vartheta \ge 0$,  and  such that
whenever $(r,s)\in\cS$ and $\E_{a,b}(\F_p)\cong\E_{r,s}(\F_p)$ it
follows that $(a,b)\in\cS$. 
 Then for any integers $A,B \ge 1$,  the following bound holds:
$$ \left|M_p(\cS,A,B)    - \frac{ 4 A B \#\cS  }{(p-1)^2} \right|
\le E_\vartheta(U,V;p) p^{o(1)}, 
$$
where 
$$
U = \max\{A,B\} \mand V = \min\{A,B\}.
$$
\end{theorem}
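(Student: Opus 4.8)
The plan is to feed the hypothesis on $\cS$ into Lemma~\ref{lem:S and Z}, which already reduces $M_p(\cS,A,B)$ to the sum $\frac{1}{p-1}\sum_{(r,s)\in\cS}\#\cZ_{r,s}(A,B;p)$ up to an error $O(AB/p + A + B)$, and then estimate how far this sum is from its expected value $\frac{4AB\#\cS}{(p-1)^2}$. Writing $W = \max\{A,B\}$ (used below only informally), by Lemma~\ref{lem:S and Z} it suffices to bound
$$
\Sigma := \frac{1}{p-1}\sum_{(r,s)\in\cS}\left(\#\cZ_{r,s}(A,B;p) - \frac{4AB}{(p-1)}\right).
$$
I would split this as a telescoping of two differences: first replace $\#\cZ_{r,s}(A,B;p)$ by the ``one-step expected value'' $\frac{2A\,\#\cZ_s(B;p)}{p-1}$, and then replace $\frac{2A\,\#\cZ_s(B;p)}{p-1}$ by $\frac{4AB}{(p-1)^2}$. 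The first difference is controlled by Lemma~\ref{lem:ZrsAB}, the second by Lemma~\ref{lem:ZsB}.

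For the first piece, for each fixed $s$ I would apply Cauchy--Schwarz over the (at most $\#\cS$, but really at most $p$ for each $s$) pairs $(r,s)\in\cS$, getting
$$
\sum_{r:(r,s)\in\cS}\left|\#\cZ_{r,s}(A,B;p) - \frac{2A\,\#\cZ_s(B;p)}{p-1}\right|
\le (\#\cS_s)^{1/2}\left(\sum_{r\in\Fp^*}\left|\#\cZ_{r,s}(A,B;p) - \frac{2A\,\#\cZ_s(B;p)}{p-1}\right|^2\right)^{1/2},
$$
where $\cS_s = \{r:(r,s)\in\cS\}$. Lemma~\ref{lem:ZrsAB} bounds the inner sum by $(A/p+1)^2(B/p+1)Bp^{1+o(1)}$, and since $\#\cS_s \le p$ and $\sum_s (\#\cS_s)^{1/2}\le (\#\cS)^{1/2}\cdot(\#\{s\})^{1/2}\le (\#\cS)^{1/2}p^{1/2}$ by Cauchy--Schwarz again, summing over $s$ and dividing by $p-1$ should produce terms of the shape $p^{o(1)}\#\cS^{1/2}\,(A/p+1)(B/p+1)^{1/2}B^{1/2}$, i.e. after using $\#\cS\le p^{2-\vartheta+o(1)}$, a sum of monomials matching the $AB z^{-1/2-\vartheta/2}$, $AB z^{-1}$, $V z^{1/2-\vartheta/2}$, $V^{1/2} z^{1-\vartheta/2}$ (and lower) terms in $E_\vartheta(U,V;z)$. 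For the second piece, Lemma~\ref{lem:ZsB} gives $\#\cZ_s(B;p) = 2B + O((B/p+1)p^{1/2+o(1)})$ uniformly in $s$, so $\frac{2A}{p-1}\sum_{(r,s)\in\cS}(\#\cZ_s(B;p)-2B) \ll \frac{A\#\cS}{p}(B/p+1)p^{1/2+o(1)}$, and then $\frac{4AB}{(p-1)^2}\cdot\#\cS - \frac{4AB\#\cS}{(p-1)^2}$ is zero — wait, more carefully one must also account for $\frac{4AB}{p-1}\cdot\#\cS/(p-1)$ versus the $(p-1)$ vs $(p-1)^2$ bookkeeping, which only contributes lower-order $p^{-1}$ corrections absorbed into $p^{o(1)}$ factors. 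These contributions again reduce, via $\#\cS\le p^{2-\vartheta+o(1)}$, to monomials present in $E_\vartheta$.

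The bookkeeping — matching the seven monomials in $E_\vartheta(U,V;p)$ and checking that $\max\{A,B\}$ versus $\min\{A,B\}$ is assigned correctly — is the fiddly part, but it is genuinely routine: expand $(A/p+1)$, $(B/p+1)$, use $A,B\le U$ and whichever of them is $V$ where a square root appears, and note that the Lemma~\ref{lem:S and Z} error $O(AB/p+A+B)$ is dominated by $E_\vartheta$ when $\vartheta\le 1$ (and absorbed otherwise). \textbf{The main obstacle} I anticipate is the double application of Cauchy--Schwarz in the first piece: one has to be careful that summing $(\#\cS_s)^{1/2}$ over $s$ does not cost more than a factor $(\#\cS)^{1/2}p^{1/2}$, and in particular that one does \emph{not} lose an extra factor of $p^{1/2}$ by bounding each $\#\cS_s$ by $p$ individually. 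Using $\sum_s(\#\cS_s)^{1/2}\le(\sum_s 1)^{1/2}(\sum_s\#\cS_s)^{1/2} = p^{1/2}(\#\cS)^{1/2}$ is the right move, and keeping this clean is what makes the $z^{-\vartheta/2}$ rather than $z^{-\vartheta/2 + \text{something worse}}$ exponents come out. Everything else is orthogonality relations already packaged in Lemmas~\ref{lem:ZsB} and~\ref{lem:ZrsAB}.
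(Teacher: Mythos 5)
Your proposal is correct and follows essentially the same route as the paper: reduce via Lemma~\ref{lem:S and Z}, telescope the deviation of $\#\cZ_{r,s}(A,B;p)$ through $\frac{2A\,\#\cZ_s(B;p)}{p-1}$ using Lemmas~\ref{lem:ZrsAB} and~\ref{lem:ZsB}, apply Cauchy--Schwarz against $\#\cS$, and expand into the monomials of $E_\vartheta$. The only cosmetic difference is that the paper applies Cauchy--Schwarz once over all $(r,s)\in\cS$, extends the resulting $\ell^2$ sum to all of $\F_p^*\times\F_p^*$, and then invokes Lemma~\ref{lem:ZrsAB} $s$-by-$s$ (picking up a factor $p$ from $\sum_s$), whereas you do a two-step Cauchy--Schwarz extracting $(\#\cS_s)^{1/2}$ for each $s$ and then $\sum_s(\#\cS_s)^{1/2}\le p^{1/2}(\#\cS)^{1/2}$; these are numerically identical, so the ``obstacle'' you flag is not actually a point of divergence.
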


\begin{proof}  Assume that $A \ge B$. 
Using Lemma~\ref{lem:S and Z}, we derive
\begin{equation*}
\begin{split}
M_p(\cS,A, B)  & - \frac{4 A B \#\cS }{(p-1)^2}\\
 & \ll~ \frac{1}{p-1}\sum_{(r,s)\in
\cS}\left|\#\cZ_{r,s}(A,B;p)-  \frac{4 A B }{p-1}\right| +\frac{A B }{p} + A + B.
\end{split}
\end{equation*}
Furthermore, by Lemma~\ref{lem:ZsB} (and since $A \ge B$) we see that 
\begin{equation}
\label{eq:M Delta}
\begin{split}
M_p(\cS,A, B)  - &\frac{4 A B \#\cS }{(p-1)^2} \\ 
 & \ll \Delta  + ABp^{-1/2-\vartheta+o(1)}+
A p^{1/2-\vartheta+o(1)}  + ABp^{-1}  + A, 
\end{split} 
\end{equation}
where
$$
\Delta = \frac{1}{p-1}\sum_{(r,s)\in\cS}
\left|\#\cZ_{r,s}(A,B;p)-  \frac{2 A \# \cZ_s(B,p) }{p-1}\right|. 
$$

By the Cauchy inequality, 
\begin{eqnarray*}
\Delta^2 & \ll &  \frac{\#\cS }{p^2}
\sum_{(r,s)\in\cS}
\left|\#\cZ_{r,s}(A,B;p)-  \frac{2 A \# \cZ_s(B,p) }{p-1}\right|^2\\
& \le &  p^{-\vartheta+o(1)}
\sum_{(r,s)\in\cS}
\left|\#\cZ_{r,s}(A,B;p)-  \frac{2 A \# \cZ_s(B,p) }{p-1}\right|^2\\
& \le &  p^{-\vartheta+o(1)}
 \sum_{r,s\in \F_p^*}
\left|\#\cZ_{r,s}(A,B;p)-  \frac{2 A \# \cZ_s(B,p) }{p-1}\right|^2. 
\end{eqnarray*}
Now, using Lemma~\ref{lem:ZrsAB} for each $s \in \F_p^*$,  we obtain
$$
\Delta^2 \ll  \(\frac{A}{p}+1\)^2 \(\frac{B}{p} + 1\) B  p^{2-\vartheta+o(1)}.
$$
Hence, 
$$
\Delta \le 
ABp^{-1/2 -\vartheta/2 + o(1)} + AB^{1/2} p^{-\vartheta/2 + o(1)}
+Bp^{1/2 -\vartheta/2 + o(1)} +
B^{1/2} p^{1-\vartheta/2 + o(1)}. 
$$
Recalling~\eqref{eq:M Delta} and using that  
$$
ABp^{-1/2 -\vartheta/2}>ABp^{-1/2 -\vartheta}, 
$$
we obtain 
$$
\left| M_p(\cS,A,B)    - \frac{ 4 A B \#\cS  }{(p-1)^2}  \right|
\le E_\vartheta(A,B;p) p^{o(1)}.
$$
It is also easy to see that the  roles of $A$ and  $B$ can  be interchanged
in all previous arguments which  
concludes the proof. 
\end{proof}

In all our applications in this paper we have $\vartheta \le 1/2$, 
in which case $UVz^{-1/2 -\vartheta/2} \ge  UVz^{-1}$ and
$U z^{1/2 - \vartheta} \ge U$, thus we 
obtain
\begin{equation}
\label{eq:simpl E}
\begin{split}
E_\vartheta(U,V;z) \ll  UVz^{-1/2 -\vartheta/2 }  +  
UV^{1/2} z^{-\vartheta/2} + &
U z^{1/2 - \vartheta} \\
  +&   Vz^{1/2 -\vartheta/2}    + V^{1/2} z^{1-\vartheta/2}.
\end{split}
\end{equation}

\subsection{The Lang--Trotter Conjecture on Average}

We now derive the following generalisations 
of~\eqref{eq:L-T Aver}.

\begin{theorem}
\label{thm:LT Aver} Suppose 
that $f(T), g(T) \in \Z[T]$ are not powers of another polynomial 
over $\Q$.  Assume that   positive integers $A$ and $B$  
are such that for some $\eps> 0$ we have
$$
\max\{AB^{1/2},A^{1/2}B\} \ge x^{5/4 +\eps}
  \mand  \min\{A,B\} \ge x^{1/2 +\eps}.
$$
Then we have
$$
\frac{1}{4AB}\sum_{|a|\le A}\sum_{|b|\le B}
\Pi^{\tt LT}_{f(a),g(b)}(t;x)  =   \(C(t)+  o(1) \)\frac{\sqrt x}{\log
x}.
$$
for some constant $C(t)>0$ depending only on $t$.
\end{theorem}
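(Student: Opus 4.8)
The plan is to deduce Theorem~\ref{thm:LT Aver} from the general estimate in Theorem~\ref{thm:M and S} applied to the sets $\cS = \cR_p(t)$ of pairs $(r,s)$ for which $\#\E_{r,s}(\F_p) = p+1-t$, and then to sum over primes $p \le x$. First I would record the obvious but essential combinatorial identity
$$
\sum_{|a|\le A}\sum_{|b|\le B}\Pi^{\tt LT}_{f(a),g(b)}(t;x) \;=\; \sum_{3 < p \le x} M_p(\cR_p(t),A,B) + O(A+B),
$$
where the error term absorbs the finitely many primes dividing the discriminant-type quantities and the ``arbitrary'' definition of $\Pi^{\tt LT}_{a,b}$ when $4a^3+27b^2=0$. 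The set $\cR_p(t)$ is manifestly closed under the isomorphism relation $\E_{a,b}(\F_p)\cong\E_{r,s}(\F_p)$ (isomorphic curves over $\F_p$ have the same number of points), so Theorem~\ref{thm:M and S} applies, and by Lemma~\ref{lem:LT Upper} we may take $\vartheta = 1/2$ since $\#\cR_p(t) \ll p^{3/2+o(1)} = p^{2 - 1/2 + o(1)}$.

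Next I would substitute the conclusion of Theorem~\ref{thm:M and S} with $\vartheta = 1/2$:
$$
M_p(\cR_p(t),A,B) = \frac{4AB\,\#\cR_p(t)}{(p-1)^2} + O\!\left(E_{1/2}(U,V;p)\,p^{o(1)}\right),
$$
with $U = \max\{A,B\}$, $V=\min\{A,B\}$. Summing the main term over $p\le x$ and invoking Lemma~\ref{lem:LT Stat} gives
$$
\sum_{3<p\le x}\frac{4AB\,\#\cR_p(t)}{(p-1)^2} = 4AB\,\bigl(C(t)\li_{1/2}(x) + O(x^{1/2}(\log x)^{-C})\bigr) = 4AB\,(C(t)+o(1))\frac{\sqrt x}{\log x},
$$
which after division by $4AB$ produces exactly the claimed main term. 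It then remains to control the accumulated error, i.e.\ to show that
$$
\frac{1}{4AB}\sum_{3<p\le x} E_{1/2}(U,V;p)\,p^{o(1)} \;=\; o\!\left(\frac{\sqrt x}{\log x}\right)
$$
under the hypotheses $\max\{AB^{1/2},A^{1/2}B\}\ge x^{5/4+\eps}$ and $\min\{A,B\}\ge x^{1/2+\eps}$. Using the simplified form~\eqref{eq:simpl E} of $E_\vartheta$ (legitimate since $\vartheta = 1/2 \le 1/2$), and summing each of the five terms over $p\le x$ — each sum over $p\le x$ of $p^{\sigma}$ being $\ll x^{1+\sigma}$ for $\sigma > -1$ and $\ll \log x$ otherwise — I would reduce to checking that each of
$$
\frac{AB\,x^{1/4+o(1)}}{AB},\quad \frac{AB^{1/2}x^{3/4+o(1)}}{AB},\quad \frac{U\,x^{1+o(1)}}{AB},\quad \frac{Vx^{5/4+o(1)}}{AB},\quad \frac{V^{1/2}x^{7/4+o(1)}}{AB}
$$
(after appropriately symmetrizing in $A,B$) is $o(\sqrt x/\log x) = x^{1/2+o(1)}$. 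The first is immediate; the second requires $B^{-1/2} \le x^{-1/4-\eps'}$ up to symmetry, i.e.\ $\min\{A,B\}\ge x^{1/2+\eps}$; the $U x$ term needs $\min\{A,B\}\ge x^{1/2+\eps}$ again; and the terms involving the explicit $x^{5/4}$ and $x^{7/4}$ powers are precisely where the hypothesis $\max\{AB^{1/2}, A^{1/2}B\}\ge x^{5/4+\eps}$ is used — one checks $V x^{5/4}/(AB) = x^{5/4}/U \le x^{1/2-\eps}$ and $V^{1/2}x^{7/4}/(AB) = x^{7/4}/(U V^{1/2}) \le x^{1/2-\eps}$, both of which are exactly the two branches of $\max\{AB^{1/2},A^{1/2}B\}\ge x^{5/4+\eps}$. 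The positivity of $C(t)$ is inherited from Lemma~\ref{lem:LT Stat}.

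The main obstacle I anticipate is purely bookkeeping: the error function $E_\vartheta$ has several terms, each sensitive in a different way to which of $A,B$ is larger, so one must carry through the $U$–$V$ notation carefully and verify that the two asymmetric conditions $AB^{1/2}\ge x^{5/4+\eps}$ and $A^{1/2}B\ge x^{5/4+\eps}$ together (i.e.\ their max) exactly neutralise the two ``bad'' terms $V z^{1/2-\vartheta/2}$ and $V^{1/2}z^{1-\vartheta/2}$ in $E_{1/2}$. There is no genuine analytic difficulty beyond what is already packaged in Theorem~\ref{thm:M and S} and Lemma~\ref{lem:LT Stat}; the content of the theorem is in matching the range of validity to the shape of $E_\vartheta$.
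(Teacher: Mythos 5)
Your proposal is correct and follows the paper's own proof essentially step by step: apply Theorem~\ref{thm:M and S} with $\cS=\cR_p(t)$ and $\vartheta=1/2$ from Lemma~\ref{lem:LT Upper}, sum over $p\le x$, invoke Lemma~\ref{lem:LT Stat} for the main term, and verify that each term of $E_{1/2}$ is negligible under the stated range conditions. Two tiny imprecisions, neither of which affects correctness: (i) you need each error term to be $\ll x^{1/2-\delta}$ rather than merely $x^{1/2+o(1)}$, which the $\eps$-gaps indeed deliver, so the equality ``$o(\sqrt x/\log x)=x^{1/2+o(1)}$'' should read as an inclusion; and (ii) the bound $x^{5/4}/U\le x^{1/2-\eps'}$ for the $Vz^{1/4}$ term is not literally a ``branch'' of the max hypothesis but follows from $UV^{1/2}\ge x^{5/4+\eps}$ combined with $U\ge V$ (giving $U\ge x^{5/6+2\eps/3}$), exactly as the paper notes via $A^{3/2}\ge AB^{1/2}$; you also silently skip the paper's small reduction from $\sum\#\cR_p(t)/(p-1)^2$ to $\sum\#\cR_p(t)/p^2+O(1)$ using $\#\cR_p(t)\ll p^{3/2+o(1)}$, which is routine.
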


\begin{proof} In the notations of Sections~\ref{sec:Isom} 
and~\ref{sec:Ell Curves}, we have 
$$
\sum_{|a|\le A}\sum_{|b|\le B}
\Pi^{\tt LT}_{f(a),g(b)}(t;x)  = \sum_{p \le x} M_p(\cR_p(t),A,B).
$$

Assume that $A \ge B$. 
Now from Theorem~\ref{thm:M and S} applied with $\cS = \cR_p(t)$
(thus $\vartheta= 1/2$ by Lemma~\ref{lem:LT Upper},  so we can also use~\eqref{eq:simpl E}),  we derive
\begin{equation*}
\begin{split}
\sum_{|a|\le A}\sum_{|b| \le B} &
\Pi^{\tt LT}_{f(a),g(b)}(t;x)  -  4 A   B  
\sum_{p \le x}   \frac{\cR_p(t)}{(p-1)^2}   \\
& \le    \(ABx^{1/4}  +    A B^{1/2} x^{3/4}  
+ A x + Bx^{5/4} + B^{1/2} x^{7/4}\)x^{o(1)} , 
\end{split}
\end{equation*} 
as $x\to \infty$. 

By Lemma~\ref{lem:LT Upper} and Lemma~\ref{lem:LT Stat}
(taken with, say, $C=2$)
we see that 
\begin{eqnarray*}
\sum_{p \le x} \frac{ \#\cR_p(t)  }{(p-1)^2} &=& \sum_{p \le x} 
 \#\cR_p(t) \(\frac{1}{p^2} + O\(p^{-3}\)\) \\
&=&   \sum_{p \le x} \frac{1}{p^2} \# \cR_p(t)   + O(1)\\
&=& C(t)\li_{1/2}(x) + O\(x^{1/2} (\log x)^{-2}\) .
\end{eqnarray*}
Hence (disregarding  the term $A B  x^{1/4 }$,  which is obviously smaller 
than the contribution from the error term in the previous 
formula), we obtain
\begin{equation*}
\begin{split}
\sum_{|a|\le A}&\sum_{|b| \le B} 
\Pi^{\tt LT}_{f(a),g(b)}(t;x)     -  4    A B  C(t)\li_{1/2}(x)  \\
  \ll  &  \( A B^{1/2} x^{3/4 }  
   +   A x   +     Bx^{5/4 } + B^{1/2} x^{7/4}\)x^{o(1)}+ AB x^{1/2} (\log x)^{-2} 
    \\
    &\ll  AB x^{1/2+o(1)} \(B^{-1/2}  x^{1/4} 
  +  B^{-1} x^{1/2}  +  A^{-1} x^{3/4} + A^{-1}B^{-1/2} x^{5/4}\)  \\
  &\qquad \qquad \qquad \qquad \qquad\qquad\qquad 
  \qquad \qquad\qquad \quad  +  AB x^{1/2} (\log x)^{-2} .
\end{split}
\end{equation*}
Since $A \ge B$, 
 we have $A^{3/2} \ge AB^{1/2} \ge x^{5/4+ \eps}$.
 Thus $A \ge x^{5/6+ 2\eps/3}$, 
and  after simple calculations, we obtain the  desired result in the case  $A \ge B$.

In the case $A < B$ the proof is completely analogous. 
\end{proof}

Since $\max\{AB^{1/2},A^{1/2}B\} \ge (AB)^{3/4}$
we can replace the first condition of Theorem~\ref{thm:LT Aver}
with $AB  \ge x^{5/3+\eps}$.

\subsection{The Sato--Tate Conjecture on Average}

We now use Theorem~\ref{thm:M and S} to obtain 
a generalisation of~\eqref{eq:S-T Aver} and~\eqref{eq:BS threshold}.

We  recall that the Sato--Tate density 
$\mu_{\tt ST}(\alpha,\beta)$ is given by~\eqref{eq:ST dens}. 

\begin{theorem}
\label{thm:ST Aver}  Suppose 
that $f(T), g(T) \in \Z[T]$ are not powers of another polynomial 
over $\Q$.
Assume that   positive integers $A$ and $B$  
are such that for some $\eps> 0$ we have
$$
\max\{AB^{1/2},A^{1/2}B\} \ge x^{1 +\eps}
  \mand  \min\{A,B\} \ge x^{1/2 +\eps}.
$$
Then for  all real numbers $0 \le \alpha < \beta \le \pi$, we have
$$
\frac{1}{4AB} \sum_{|a|\le A}\sum_{|b| \le B}
\Pi^{\tt ST}_{f(a),g(b)}(\alpha,\beta;x)= \(\mu_{\tt ST}(\alpha,
\beta) + O\(x^{-\delta}\)\) \pi(x),
$$
where  $\delta > 0$ depends only on  $\eps$.
\end{theorem}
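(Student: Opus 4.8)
The plan is to mimic the proof of Theorem~\ref{thm:LT Aver}, replacing the Lang--Trotter statistics with the Sato--Tate ones. First I would observe that, in the notation of Sections~\ref{sec:Isom} and~\ref{sec:Ell Curves},
$$
\sum_{|a|\le A}\sum_{|b| \le B}
\Pi^{\tt ST}_{f(a),g(b)}(\alpha,\beta;x) = \sum_{p \le x} M_p(\cT_p(\alpha,\beta),A,B),
$$
since the condition $\alpha \le \psi(\E;p) \le \beta$ depends only on the isomorphism class of $\E$ over $\F_p$, so $\cT_p(\alpha,\beta) \subseteq \F_p^*\times\F_p^*$ is closed under the relation $\E_{a,b}(\F_p)\cong\E_{r,s}(\F_p)$ as required in Theorem~\ref{thm:M and S}. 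The relevant cardinality bound is now $\#\cT_p(\alpha,\beta) \le p^2$ trivially, so the hypothesis of Theorem~\ref{thm:M and S} holds with $\vartheta = 0$; in particular one cannot use the simplified form~\eqref{eq:simpl E} and must work with the full $E_0(U,V;p)$.

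Next I would apply Theorem~\ref{thm:M and S} with $\cS = \cT_p(\alpha,\beta)$ and $\vartheta = 0$ to get, assuming $A \ge B$,
$$
\left| M_p(\cT_p(\alpha,\beta),A,B) - \frac{4AB\,\#\cT_p(\alpha,\beta)}{(p-1)^2}\right|
\le E_0(A,B;p)\, p^{o(1)},
$$
and sum over primes $p \le x$. The error term sums to $E_0(A,B;x)\,x^{1+o(1)}$ up to logarithmic factors (each summand is dominated by its value at $p = x$, times $\pi(x)$), and the threshold hypotheses $\max\{AB^{1/2}, A^{1/2}B\} \ge x^{1+\eps}$ and $\min\{A,B\} \ge x^{1/2+\eps}$ should be precisely what is needed to make $E_0(A,B;x)\,x^{1+o(1)}$ a power of $x$ smaller than $AB\,\pi(x) \asymp AB x/\log x$; I would check each of the five surviving terms of $E_0$ against $ABx$ — e.g. $A^{1/2}B\,x \ll ABx^{1-\eps}$ amounts to $A^{1/2}B \gg x^{\eps}\cdot x^{0}$ which follows from $\min\{A,B\}\ge x^{1/2+\eps}$, and $A x^{1/2}$ against $ABx$ needs $B \gg x^{-1/2+\eps}$, etc. For the main term I would use Lemma~\ref{lem:ST Stat}: $\#\cT_p(\alpha,\beta) = \mu_{\tt ST}(\alpha,\beta)p^2 + O(p^{7/4})$, so that $\#\cT_p(\alpha,\beta)/(p-1)^2 = \mu_{\tt ST}(\alpha,\beta) + O(p^{-1/4})$, and summing over $p \le x$ gives $\mu_{\tt ST}(\alpha,\beta)\pi(x) + O(x^{3/4})$, which is absorbed into the $O(x^{-\delta})\pi(x)$ error after dividing by $4AB$.

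The hard part will be bookkeeping the exponents: verifying that under the stated range each of the seven terms of $E_0(A,B;x)$ (five after noting $\vartheta=0$ does not permit the simplification, so all seven must be handled, or I check directly that $ABx^{-1/2} \ge ABx^{-1}$ and $Ax^{1/2} \ge A$ still hold so only five matter) is bounded by $AB x^{1-\delta}$ for some fixed $\delta = \delta(\eps) > 0$, and likewise that the vertical-Sato--Tate error $O(x^{3/4})$ beats $AB x^{1-\delta}/\pi(x)$ — this is automatic since $AB \ge \min\{A,B\}^2 \ge x^{1+2\eps}$, indeed $AB \ge x \ge x^{3/4}$. Because the roles of $A$ and $B$ enter the threshold symmetrically via $\max\{AB^{1/2},A^{1/2}B\}$, the case $A < B$ follows by interchanging $A$ and $B$ exactly as in Theorem~\ref{thm:M and S}. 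I would finally set $\delta$ to be the minimum of the finitely many gaps produced above, completing the proof.
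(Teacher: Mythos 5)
Your plan follows the paper's proof essentially step for step: the isomorphism-invariance of $\psi(\E;p)$ lets you write $\sum_{|a|\le A}\sum_{|b|\le B}\Pi^{\tt ST}_{f(a),g(b)}(\alpha,\beta;x) = \sum_{p\le x} M_p(\cT_p(\alpha,\beta),A,B)$, you invoke Theorem~\ref{thm:M and S} with $\cS=\cT_p(\alpha,\beta)$ and $\vartheta=0$, sum over $p\le x$, and feed in Lemma~\ref{lem:ST Stat} for the main term. Two points in your sketch need correcting, though neither alters the structure. First, the simplification~\eqref{eq:simpl E} is stated to be valid for every $\vartheta\le 1/2$, so it certainly applies with $\vartheta=0$, and the paper's own proof uses it here; your parenthetical ``direct check'' that $ABz^{-1/2}\ge ABz^{-1}$ and $Az^{1/2}\ge A$ is precisely the justification of~\eqref{eq:simpl E} in this case, not a workaround for its supposed failure. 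Second, the exponent bookkeeping in your verification has slipped: after summing $E_0(A,B;p)$ over primes $p\le x$ (taking $A\ge B$), the term $Ap^{1/2-\vartheta}=Ap^{1/2}$ contributes $Ax^{3/2+o(1)}$, not $Ax^{1/2}$, and $Ax^{3/2}\ll ABx^{1-\delta}$ forces $B\ge x^{1/2+\delta}$ --- so $\min\{A,B\}\ge x^{1/2+\eps}$ is genuinely sharp, not the large overshoot your ``$B\gg x^{-1/2+\eps}$'' computation suggests. Likewise the condition $\max\{AB^{1/2},A^{1/2}B\}\ge x^{1+\eps}$ is driven by the term $V^{1/2}z^{1-\vartheta/2}=B^{1/2}p$, which sums to $B^{1/2}x^{2+o(1)}$ and requires $AB^{1/2}\ge x^{1+\delta}$. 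With the exponents set straight, the argument closes exactly as you propose, including the interchange of $A$ and $B$ for the case $A<B$.
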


\begin{proof}In the notations of Sections~\ref{sec:Isom} 
and~\ref{sec:Ell Curves}, we have 
$$
\sum_{|a|\le A}\sum_{|b| \le B}
\Pi^{\tt ST}_{f(a),g(b)}(\alpha,\beta;x)  = \sum_{p \le x} M_p(\cT_p(\alpha,\beta),A,B).
$$

Assume that $A \ge B$. 
Now from Theorem~\ref{thm:M and S} applied with $\cS = \cT_p(\alpha,\beta)$
(thus $\vartheta= 0$ so we can also use~\eqref{eq:simpl E}),  we derive
\begin{equation*}
\begin{split}
\sum_{|a|\le A}\sum_{|b| \le B} 
\Pi^{\tt ST}_{f(a),g(b)}&(\alpha,\beta;x)    -  4 A   B  
\sum_{p \le x}   \frac{\cT_p(\alpha,\beta) }{(p-1)^2}  \\
 & \le    \(ABx^{1/2}  +    A B^{1/2} x  
   +  A x^{3/2}      +Bx^{3/2} + B^{1/2} x^{2}\)x^{o(1)}, 
\end{split}
\end{equation*} 
as $x\to \infty$. 
Since $A\ge B$ this simplifies as 
\begin{equation*}
\begin{split}
\sum_{|a|\le A} \sum_{|b| \le B} 
\Pi^{\tt ST}_{f(a),g(b)}&(\alpha,\beta;x)     -  4 A   B  
\sum_{p \le x}   \frac{\cT_p(\alpha,\beta) }{(p-1)^2}  \\
  &\le     \(ABx^{1/2}  +    A B^{1/2} x  
   +  A x^{3/2}  + B^{1/2} x^{2}\)x^{o(1)} . 
\end{split}
\end{equation*} 
Using Lemma~\ref{lem:ST Stat}
(which contributes  $AB x^{3/4 + o(1)}$ to the error term
and thus dominates the term $ ABx^{1/2  + o(1)}$) 
we obtain 
\begin{equation*}
\begin{split}
\sum_{|a|\le A}\sum_{|b| \le B} &
\Pi^{\tt ST}_{f(a),g(b)}(\alpha,\beta;x)     -  4   \mu_{\tt ST}(\alpha,\beta) A B \pi(x) \\
  \le & \(AB x^{3/4 }  +    AB^{1/2} x  +   A x^{3/2} + B^{1/2} x^{2}\) x^{o(1)} \\
  & = ABx^{1+o(1)} \( x^{-1/4 }  +    B^{-1/2}   +  B^{-1} x^{1/2} + A^{-1}B^{-1/2} x\) 
\end{split}
\end{equation*}
and  after simple calculations, we obtain the  desired result
in the case $A \ge B$.

In the case $A < B$ the proof is completely analogous. 
\end{proof}

Since $\max\{AB^{1/2},A^{1/2}B\} \ge (AB)^{3/4}$
we can replace the first condition of Theorem~\ref{thm:ST Aver}
with $AB  \ge x^{4/3+\eps}$.

Finally, from Lemma~\ref{lem:ST Stat Poly} we instantly obtain 
a result for one parametric families of elliptic curves.

\begin{theorem}
\label{thm:ST Aver Onepar}  Suppose 
that $f(T), g(T) \in \Z[T]$ are such that
$4f(T)^3 +27g(T)^2$ is not identical 
to zero.  Assume that a  positive integer $A$  
is  such that for some $\eps> 0$ we have
$$
 A  \ge x^{1/2 +\eps}.
$$
Then for  all real numbers $0 \le \alpha < \beta \le \pi$, we have
$$
\frac{1}{2A} \sum_{|a|\le A} 
\Pi^{\tt ST}_{f(a),g(a)}(\alpha,\beta;x)= \(\mu_{\tt ST}(\alpha,
\beta) + O\(x^{-\delta}\)\) \pi(x),
$$
where  $\delta > 0$  depends only on  $\eps$.
\end{theorem}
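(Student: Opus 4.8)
The plan is to interchange the order of summation — making the sum over primes outermost — and then to estimate the inner sum over $a$, one prime at a time, by means of Lemma~\ref{lem:ST Stat Poly}. Unfolding the definition of $\Pi^{\tt ST}_{f(a),g(a)}(\alpha,\beta;x)$ and swapping the two sums gives
$$\sum_{|a|\le A}\Pi^{\tt ST}_{f(a),g(a)}(\alpha,\beta;x)=\sum_{p\le x}\#\cT_{f,g,p}(A;\alpha,\beta)+O\(\sum_{p\le x}\(\frac{A}{p}+1\)\),$$
where the error term accounts for the integers $a\in[-A,A]$ with $p\mid 4f(a)^3+27g(a)^2$ (for which $\psi_{f(a),g(a)}(p)$ is fixed by convention, so that they are counted inconsistently on the two sides), and also for the $O(1)$ primes $p$ modulo which $4f(T)^3+27g(T)^2$ vanishes identically. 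Since $\sum_{p\le x}(A/p+1)\ll A\log\log x+\pi(x)$, after division by $2A$ this contributes $O(\log\log x+\pi(x)/A)$, which for $A\ge x^{1/2+\eps}$ is $\ll x^{-\delta}\pi(x)$ for a suitably small $\delta=\delta(\eps)>0$; so it remains to estimate $\#\cT_{f,g,p}(A;\alpha,\beta)$ for each $p\le x$.

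For the primes $p>2A$ the hypothesis $A<p/2$ of Lemma~\ref{lem:ST Stat Poly} is met, and the lemma gives directly $\#\cT_{f,g,p}(A;\alpha,\beta)=2\mu_{\tt ST}(\alpha,\beta)A+O(A^{1/2}p^{1/4})$. The range $p\le 2A$ is the part I expect to be the main obstacle: it is non-empty as soon as $A\ge x/2$ (and then comprises every $p\le x$), and there Lemma~\ref{lem:ST Stat Poly} cannot be applied to $[-A,A]$ in one piece. Here I would use that $\psi_{f(a),g(a)}(p)$ depends on $a$ only through $a\bmod p$, and partition $[-A,A]$ into $\fl{(2A+1)/p}$ blocks of $p$ consecutive integers, each a complete residue system modulo $p$, plus one leftover block of length less than $p$. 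On a complete residue system the number of admissible $a$ equals $\#\cT_{f,g,p}((p-1)/2;\alpha,\beta)$, which by Lemma~\ref{lem:ST Stat Poly} with $(p-1)/2$ in place of $A$ — the case recorded in the remark following that lemma — equals $\mu_{\tt ST}(\alpha,\beta)p+O(p^{3/4})$; on the leftover block, to which Lemma~\ref{lem:ST Stat Poly} still applies (its proof being insensitive to translation of the interval) after splitting it, if necessary, into at most two intervals of length below $p/2$, the count is its predicted main term plus $O(p^{3/4})$. As there are $\ll A/p+1$ blocks altogether, summing over them yields
$$\#\cT_{f,g,p}(A;\alpha,\beta)=2\mu_{\tt ST}(\alpha,\beta)A+O\(\frac{A}{p^{1/4}}+p^{3/4}\)\qquad(p\le 2A).$$

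Finally I would sum over $p\le x$. The main terms collect into $2\mu_{\tt ST}(\alpha,\beta)A\,\pi(x)$, and, using the elementary estimates $\sum_{p\le N}p^{-1/4}\ll N^{3/4}$, $\sum_{p\le N}p^{1/4}\ll N^{5/4}$, $\sum_{p\le N}p^{3/4}\ll N^{7/4}$ together with $\min\{2A,x\}\le x$ and $\min\{2A,x\}\le 2A$, the total error is
$$\ll\sum_{p\le\min\{2A,x\}}\(\frac{A}{p^{1/4}}+p^{3/4}\)+\sum_{2A<p\le x}A^{1/2}p^{1/4}\ll A^{1/2}x^{5/4+o(1)}+Ax^{3/4+o(1)}.$$
Dividing by $2A$ leaves $\mu_{\tt ST}(\alpha,\beta)\pi(x)$ together with an error $\ll x^{3/4+o(1)}+A^{-1/2}x^{5/4+o(1)}$, and since $A\ge x^{1/2+\eps}$ the latter term is $\ll x^{1-\eps/2+o(1)}$, so both error terms (and the one bounded in the first paragraph) are $\ll x^{-\delta}\pi(x)$ whenever $0<\delta<\min\{\eps/2,1/4\}$. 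This proves the theorem; apart from the periodicity decomposition for $p\le 2A$ — which is forced on us because $A$ is not bounded above — every step is routine bookkeeping with Lemma~\ref{lem:ST Stat Poly} and standard sums over primes.
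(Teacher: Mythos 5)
Your proof is correct, and since the paper itself offers no argument for Theorem~\ref{thm:ST Aver Onepar} beyond the assertion that it follows ``instantly'' from Lemma~\ref{lem:ST Stat Poly}, you have actually filled in a real gap. Swapping the two sums, charging the bad-reduction parameters $a$ (and the $O(1)$ primes modulo which $4f^3+27g^2$ vanishes identically) to the error term of size $\sum_{p\le x}(A/p+1)\ll A\log\log x+\pi(x)$, and then using Lemma~\ref{lem:ST Stat Poly} is exactly the intended route; the estimates $\sum_{p\le N}p^{\kappa}\ll N^{1+\kappa}$ and the final bookkeeping with $A\ge x^{1/2+\eps}$ all check out, yielding $\delta<\min\{\eps/2,1/4\}$.

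The genuinely valuable observation in your write-up is the one you flag as ``the main obstacle'': Lemma~\ref{lem:ST Stat Poly} carries the hypothesis $A<p/2$, which the sum over all primes $p\le x$ does not respect once $A$ is even moderately large, and in fact the bound $A^{1/2}p^{1/4}$ of that lemma is genuinely false for $A\gg p$ (periodicity gives an error closer to $Ap^{-1/4}$, which exceeds $A^{1/2}p^{1/4}$ when $A>p$). Since the theorem places no upper bound on $A$, and since the trivial bound $\#\cT_{f,g,p}(A;\alpha,\beta)\ll A$ for $p\le 2A$ contributes on the order of $\pi(2A)$ after normalisation — which is not $o(\pi(x))$ when $A$ is close to $x$ — the range $p\le 2A$ cannot be handled trivially, and your periodicity decomposition into complete residue blocks (using that $\psi_{f(a),g(a)}(p)$ depends on $a$ only modulo $p$, and the remark after Lemma~\ref{lem:ST Stat Poly} for the case $A=(p-1)/2$) is the correct and necessary fix. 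The one place you rely on something slightly beyond the literal statement is the treatment of the leftover block, where you appeal to a translated version of Lemma~\ref{lem:ST Stat Poly}; you are right that the proof via Lemma~\ref{lem:Mich bound Incomp} and completion of sums is translation-invariant, but it would be cleaner to state explicitly that you are using the obvious variant of Lemma~\ref{lem:ST Stat Poly} for an arbitrary interval of fewer than $p$ integers rather than for $[-A,A]$. With that caveat noted, the argument is complete and sound, and it supplies a proof that the paper elides.
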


\section{Comments}

We remark that  Theorem~\ref{thm:ST Aver Onepar}
may seem to imply  Theorem~\ref{thm:ST Aver} (that is, for every
$b$ with $|b|\le B$ one can try to apply Theorem~\ref{thm:ST Aver Onepar}
to the corresponding family of curves). 
However, this is not the case as due to the uniformity 
issue with respect to $b$.
We also note that Theorem~\ref{thm:ST Aver} is just
an example of several similar results that hold
under the same conditions on $A$ and $B$ and describe 
the distribution of curves with special properties. 
As in~\cite{BaCoDa,BaSh}, these properties may include
cyclicity, primality or divisibility of $\# E_{f(a),g(b)}(\F_p)$ 
by a given  integer and several others. On the other hand,
it is not clear how to obtain 
analogues of  Theorem~\ref{thm:ST Aver Onepar} for such 
questions. 

%


 
\section*{Acknowledgements} 

The author would   like to thank  Stephan Baier  for 
very interesting discussions and the observation that 
the argument of his work~\cite{Baier2} may allow 
us to obtain a version of  Theorems~\ref{thm:LT Aver}
in a very large range of uniformity with respect 
to the parameter $t$. 

This work was supported in part by ARC grant DP0881473.

\end{document}